\newtheorem{thm}{Theorem}[section]
\newtheorem{cor}{Corollary}[section]
\newtheorem{prop}{Proposition}[section]
\newcommand{\pv}{\pi^*}
\newcommand{\alp}{\alpha}
\newcommand{\C}{\mathcal{C}}
\newcommand{\E}{\mathbb{E}}
\newcommand{\R}{\mathbb{R}}
\renewcommand{\P}{\mathbb{P}}
\newcommand{\F}{\mathcal{F}}
\newcommand{\hpsi}{\hat \psi}%
\newcommand{\yo}{y_0}
\newcommand{\yL}{y_L}
\newcommand \yoL {y_{0L}}
\newcommand{\la} {\lambda}
\newcommand{\sig} {\sigma}
\newcommand \del {\delta}
\newcommand \kap {\kappa}
\newcommand \bet {\beta}
\newcommand \dd {{\rm d}}
\newcommand {\psio} {\psi^{(0)}}
\newcommand {\psil} {\psi^{(1)}}
\newcommand \De {{\bf D}}
\newcommand \K {{\bf K}}
\journalname{JOTA}
\begin{document}

\title{Minimizing the Probability of Lifetime Exponential Parisian Ruin}


\author{Xiaoqing Liang   \and  Virginia R. Young }
\institute{Xiaoqing Liang \at
             Hebei University of Technology \\
              Tianjin, China\\
              liangxiaoqing115@hotmail.com
           \and
           Virginia R. Young,  Corresponding author  \at
              University of Michigan \\
              Ann Arbor, Michigan\\
              vryoung@umich.edu
}

\date{Received: date / Accepted: date\\ Communicated by {Dylan Possama\"{i}}}

\maketitle

\begin{abstract}
We find the optimal investment strategy in a Black-Scholes market to minimize the probability of so-called {\it lifetime exponential Parisian ruin}, that is, the probability that wealth exhibits an excursion below zero of an exponentially distributed time before the individual dies.  We find that leveraging the risky asset is worse for negative wealth when minimizing the probability of lifetime exponential Parisian ruin than when minimizing the probability of lifetime ruin.  Moreover, when wealth is negative, the optimal amount invested in the risky asset increases as the hazard rate of the exponential ``excursion clock''  increases.  In view of the heavy leveraging when wealth is negative, we also compute the minimum probability of lifetime exponential Parisian ruin under a constraint on investment.  Finally, we derive an asymptotic expansion of the minimum probability of lifetime exponential Parisian ruin for small values of the hazard rate of the excursion clock.  It is interesting to find that, for small values of this hazard rate, the minimum probability of lifetime exponential Parisian ruin is proportional to the minimum occupation time studied in Bayraktar and Young, and the proportion equals the hazard rate.  To the best of our knowledge, our work is the first to {\it control} the probability of Parisian ruin.
\end{abstract}

\keywords{Exponential Parisian ruin \and Optimal investment \and Stochastic control}

\subclass{93E20 \and  91B30 \and  49K10 \and  49L20}


\section{Introduction}

The probability of lifetime ruin measures the risk that individuals run out of money before they die, which was first proposed by Milevsky and Robinson \cite{1.} and extended by Young \cite{2.}, who considered how individuals optimally invest in a risky financial market to minimize this probability.  This problem was extended to subsequent variants based on Young's work, including adding borrowing constraints (Bayraktar and Young \cite{3.}), assuming model ambiguity in the drift of the risky asset (Bayraktar and Zhang \cite{4.}), allowing stochastic volatility (Bayraktar et al.\ \cite{5.}), and transaction costs (Bayraktar and Zhang \cite{6.} and Liang and Young \cite{7.}).

From a practical point of view, investors might be able to sustain negative wealth until some (random) time when they are called to be accountable for their bankruptcy or until their wealth recovers to positive territory.  For example, during the financial crisis of the late 2000s, many homeowners were ``under water,'' meaning that the value of their home was less than the outstanding mortgage, and some of these homeowners  were able to maintain their negative wealth position until they got ``above water'' again.  On the other hand, many of these homeowners had to declare bankruptcy for a variety of reasons, such as loss of employment or increased interest rates that they could not afford.  One way to address the problem of how to invest when the wealth is negative is to minimize the expected time that the individual's wealth stays below zero, which is called the occupation time, as studied by Bayraktar and Young \cite{8.}.  Another way is to minimize the probability of lifetime exponential Parisian ruin, and that is the problem addressed in this paper.

Inspired by Parisian options (see Chesney et al.\ \cite{9.}), Parisian ruin occurs if an excursion below zero is longer than a deterministic time. Dassios and Wu \cite{10.} computed the Parisian ruin probability for a classical risk model with exponential claims and for the Brownian motion with drift.  Recently, Czarna and Palmowski \cite{11.} and Loeffen et al.\ \cite{12.} considered the probability of Parisian ruin for a spectrally negative L\'evy process using the tool of scale functions. Landriault et al.\  \cite{13.,14.} studied the Laplace transform of Parisian ruin with stochastic implementation delays. Gu\'erin and Renaud \cite{15.} computed the distribution of cumulative Parisian ruin.

In this paper, we consider the problem of optimally investing to minimize the probability of lifetime exponential Parisian ruin, that is, the probability wealth stays below zero longer than an exponentially distributed time and before the individual dies.\footnote{To the best of our knowledge, our work is the first to {\it control} the probability of Parisian ruin; all the other related research we found focused on calculating the probability of Parisian ruin.}  The individual consumes and invests in a Black-Scholes financial market consisting of one riskless and one risky asset, whose price process follows a geometric Brownian motion.  We find that individuals leverage more when wealth is negative when minimizing the probability of lifetime exponential Parisian ruin than when minimizing the probability of lifetime ruin.  Moreover, when wealth is negative, the optimal amount invested in the risky asset increases as the hazard rate of the exponential ``excursion clock''  increases.  In view of the heavy leveraging when wealth is negative, we also compute the minimum probability of lifetime exponential Parisian ruin under a constraint on investment.  Finally, we derive an asymptotic expansion of the minimum probability of lifetime exponential Parisian ruin for small values of the hazard rate of the excursion clock.  It is interesting to find that, for small values of this hazard rate, the minimum probability of lifetime exponential Parisian ruin is proportional to the minimum occupation time studied in Bayraktar and Young \cite{8.}, and the proportion equals the hazard rate of the excursion clock.

The remainder of the paper is organized as follows.  In Section 2, we describe the financial market in which the individual consumes and invests, we formalize the problem of minimizing the probability of lifetime exponential Parisian ruin, we state a verification lemma that will enable us to find that minimum probability, along with the optimal strategy for investing in the financial market, and we solve the problem of minimizing the probability of lifetime exponential Parisian ruin.  In Section 3, we analyze the solution found in Section 2 and provide a numerical experiment to illustrate the results of that section.  In Section 4, we address two additional considerations: (1) limit the investment strategy so that the amount invested in the risky asset is no greater than when minimizing the probability of lifetime ruin; and (2) provide an asymptotic expansion of the minimum probability of lifetime exponential Parisian ruin for small values of the hazard rate of the excursion clock.  Section 5 concludes the paper.

\section{Minimizing the Probability of Lifetime Exponential Parisian Ruin}\label{sec:Paris-ruin}

In Section \ref{sec:fin-model}, we describe the financial market in which the individual invests her wealth, and we formulate the problem of minimizing the probability of lifetime exponential Parisian ruin.  In that section, we also provide a verification theorem for the minimum probability of lifetime exponential Parisian ruin.  Then, in Section \ref{sec:FBP}, we construct the minimum probability as the convex Legendre dual of the solution of a related free-boundary problem.

{\bf \subsection{Financial Model and Verification Theorem}}\label{sec:fin-model}

We consider an individual with future lifetime given by the random variable $\tau_d$ living on a probability space $(\Omega, \F, \P)$.  Suppose $\tau_d$ is an exponential random variable with hazard rate $\la$, also referred to as the force of mortality; in particular, $\E(\tau_d) = 1/\la$.\footnote{The assumption of constant force of mortality rate $\la$ enables us to obtain explicit solutions.  See the work in Moore and Young \cite{16.} concerning using this assumption to obtain simple and nearly-optimal investment strategies when minimizing the probability of lifetime ruin.}

We assume that the individual consumes wealth at a constant {\it net} rate of $c$; this rate may be given in real or nominal units.  We say that the rate $c$ is a net rate because it is the rate of consumption offset by any income. One can interpret $c$ as the minimum net consumption level below which the individual cannot (or will not) reduce her consumption; therefore, the minimum probability of lifetime Parisian ruin that we compute gives a lower bound for the probability of lifetime Parisian ruin under any rate of consumption bounded below by $c$.

The individual can invest in a riskless asset, which earns interest at the rate $r > 0$.  Also, she can invest in a risky asset whose price process follows
\begin{equation}
\dd S_t = \mu S_t \dd t + \sig S_t \dd B_t, \quad S_0 = S > 0,
\end{equation}
in which $\mu > r$, $\sig > 0$,\footnote{If $c$ were given as a real rate of consumption (that is, after inflation), then we would also express $r$ and $\mu$ as real rates.} and $B$ is a standard Brownian motion with respect to a filtration $\mathbb{F} = \{{\F}_t \}_{t \geq 0}$ of the probability space $(\Omega, \F, \P)$.  We assume that $B$ is independent of $\tau_d$, the random time of death of the individual.  We enlarge the filtration $\mathbb{F}$ to include the information generated by the death process $D$.  Specifically, define the death process $D = \{D_t\}_{t \ge 0}$ by $D_t = {\bf 1}_{\{\tau_d \le t\}}$; thus, $D$ jumps from 0 to 1 when the individual dies. Then, let $\mathbb{G} = \{ \mathcal{G}_t \}_{t \ge 0}$ be the progressive enlargement of the filtration $\mathbb{F}$ by $D$, in which $\mathcal{G}_t = \F_t \vee \sig(D_u: 0 \le u \le t)$ for all $t \ge 0$.  Assume $\mathbb{F}$ and $\mathbb{G}$ are augmented to satisfy the usual conditions of completeness and right continuity.  Note that $B$ is a $\mathbb{G}$-martingale.

Let $\pi_t$ denote the amount invested in the risky asset at time $t$, and let $\pi$ denote the investment strategy $\{\pi_t\}_{t \geq 0}$.  We say that a strategy $\pi$ is {\it admissible} (1) if the process $\pi$ is adapted to the filtration $\mathbb{F}$, (2) if $\pi$ satisfies $\int_0^t \pi_s^2 \, \dd s < \infty$, almost surely, for all $t \ge 0$, and (3) if, given $W_t = c/r$, then $\pi_s = 0$ for all $s \ge t$.  The third condition ensures that if wealth reaches $c/r$, then it never falls below $c/r$.  The wealth dynamics of the individual under an admissible strategy $\pi$ are given by
\begin{equation}\label{eq:wealth}
\dd W^\pi_t = (r W_t + (\mu - r) \pi_t - c) \dd t + \sig \pi_t \dd B_t, \quad
W^\pi_0 = w.
\end{equation}

By {\it lifetime exponential Parisian ruin}, we mean an excursion of wealth below zero in excess of a random length of time before the individual dies, in which the random length of time is exponentially distributed.  One could also consider an excursion of wealth below some arbitrary level, not necessarily $0$, but for ease of presentation, we choose the level to be $0$.  By following Gu\'erin and Renaud \cite{15.}, we define the time of exponential Parisian ruin by
\begin{equation}\label{eq:expPar_ruin}
\kap^\pi = \inf \{t > 0: t - g_t > {\bf e}_\rho^{g_t} \},
\end{equation}
in which $g_t = \sup \{ s \in [0, t]: W^\pi_s \ge 0 \}$  and each random variable ${\bf e}_\rho^{g_t}$ is exponentially distributed with hazard rate $\rho > 0$.  Note that we reset the ``excursion clock'' to $0$ whenever surplus reaches $0$ from below; indeed, if $W^\pi_t \ge 0$, then $t - g_t = 0$, and $\kap^\pi = \inf \emptyset = \infty$.  Note that $\kap^\pi$ depends on the investment strategy $\pi$ via the wealth process $W^\pi$.

For each strategy $\pi$, we further augment the filtration $\mathbb{G}$ to include information generated by $\kap^\pi$.  Specifically, define the  exponential Parisian ruin process $K^\pi = \{K^\pi_t\}_{t \ge 0}$ by $K^\pi_t = {\bf 1}_{\{\kap^\pi \le t\}}$ for all $t \ge 0$; thus, $K^\pi$ jumps from 0 to 1 when exponential Parisian ruin occurs.  Then, we define the filtration $\mathbb{H}^\pi = \{ \mathcal{H}^\pi_t \}_{t \ge 0}$ by $\mathcal{H}^\pi_t = \mathcal{G}_t \vee \sig(K^\pi_u: 0 \le u \le t)$ for all $t \ge 0$.  Note that $B$ is a $\mathbb{H}^\pi$-martingale.

We wish to minimize the probability of lifetime exponential Parisian ruin, defined by
$$
\P^w \big( \kap^\pi < \tau_d \big),
$$
in which $\P^w$ denotes probability conditional on $W_0 = w$.  However, there is a problem with the goal of minimizing this probability.  Indeed, we expect the minimum of $\P^w \left( \kap^\pi < \tau_d \right)$ to be a bounded, convex, non-increasing function of initial wealth $w$, but there is no bounded, convex, non-increasing function defined on the reals, other than a constant function.  Therefore, we modify the problem as follows: First, define the minimum wealth process $Z^\pi = \{ Z^\pi_t \}_{t \ge 0}$ by
\[
Z^\pi_t = \inf \limits_{0 \le s \le t} W^\pi_s.
\]
Then, for a positive constant $L$, define the value function $\psi$ by
\begin{equation}\label{eq:psi}
\psi(w) = \inf_\pi \E^w \Bigg( {\bf 1}_{\{\kap^\pi < \tau_d \}} {\bf 1}_{\{ Z^\pi_{\kap^\pi \wedge \tau_d} > -L \}} + \dfrac{\rho}{\la + \rho} \, {\bf 1}_{\{ Z^\pi_{\kap^\pi \wedge \tau_d} \le -L \}} \Bigg).
\end{equation}
Here, we take the infimum over admissible investment strategies.

For large values of $L > 0$, the control problem associated with $\psi$ approximates the problem of minimizing the probability of lifetime exponential Parisian ruin.  Indeed, if lifetime wealth stays above $-L$, which is likely for $L$ large, then the payoff is the probability of lifetime exponential Parisian ruin.  If wealth falls below $-L$, then we suppose that the individual will have negative wealth for the rest of her life (that is, until $\tau_d$) or until the exponential excursion clock runs down (that is, until $\tau_\rho$), and we end the game with the value of $\frac{\rho}{\la + \rho}$, which equals the probability that a generic $\tau_\rho$ occurs before $\tau_d$.

Note that for $w \le -L$, $\psi(w) = \frac{\rho}{\la + \rho}$, and for $w \ge c/r$, $\psi(w) = 0$.  The latter holds because if $w \ge c/r$, then the individual can place all her wealth in the riskless asset and wealth will never go below $w \ge c/r$, much less reach 0 and trigger possible exponential Parisian ruin.  Therefore, it remains for us to determine $\psi$ for $w \in [-L, c/r]$, and a verification theorem will help us with that goal.

We present the following verification theorem, whose proof closely follows the one of Theorem 2.1 in Bayraktar and Young \cite{8.}.

\begin{thm}\label{thm:verif}
Suppose $\Psi: [-L, c/r] \to [0, 1]$ is a continuous function that satisfies the following conditions.
\begin{enumerate}
\item{} $\Psi$ is non-increasing and convex and lies in $\C^2\big(\; ]-L, c/r[ \, \big),$ except at $0,$ where it is $\C^1$ and has left- and right-second derivatives.
\item{} $\Psi(-L) = \dfrac{\rho}{\la + \rho} \, $.
\item{} $\Psi(c/r) = 0$.
\item{} $\Psi$ solves the following Hamilton-Jacobi-Bellman $($HJB$)$ equation on $]-L, c/r[\,:$
\begin{equation}\label{eq:HJB}
\la \Psi + \rho( \Psi - 1) {\bf 1}_{\{w < 0\}} = (rw - c) \Psi_w + \inf \limits_\pi \left[(\mu-r)\pi \Psi_w + \frac{1}{2} \, \sig^2 \pi^2 \Psi_{ww} \right].
\end{equation}
\end{enumerate}
Then, the value function $\psi$ on $[-L, c/r]$ defined by \eqref{eq:psi} is given by
\begin{equation}\label{eq:verif}
\psi(w) = \Psi(w),
\end{equation}
and the optimal investment strategy $\pv$ on $]-L, c/r[$ is given in feedback form by
\begin{equation}\label{eq:pi}
\pv_t = - \, \dfrac{\mu - r}{\sig^2} \, \dfrac{\Psi_w \big(W^*_t \big)}{\Psi_{ww} \big(W^*_t \big)} \, ,
\end{equation}
in which $W^*_t$ is the optimally controlled wealth at time $t$.
\end{thm}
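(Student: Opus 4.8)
The plan is to prove $\psi\equiv\Psi$ on $[-L,c/r]$ by establishing the two inequalities $\psi\ge\Psi$ and $\psi\le\Psi$, the second via the feedback strategy $\pv$ of \eqref{eq:pi}; this is the standard structure of a verification argument, and the only genuinely new feature relative to Theorem~2.1 of Bayraktar and Young \cite{8.} is the excursion (Parisian) clock. The conceptual heart of the argument is the following reduction. Because each clock ${\bf e}_\rho^{g_t}$ is exponential and is reset whenever $W^\pi$ returns to $0$, the memorylessness of the exponential distribution together with the strong Markov property, applied at the successive times $W^\pi$ hits $0$ from below, shows that, conditionally on the Brownian path, $\kap^\pi$ has effective hazard rate $\rho\,{\bf 1}_{\{W^\pi_t<0\}}$; equivalently $\P\big(\kap^\pi>t\mid\F_\infty\big)=e^{-\rho A^\pi_t}$, where $A^\pi_t:=\int_0^t {\bf 1}_{\{W^\pi_s<0\}}\,\dd s$ is the occupation time of $(-\infty,0)$ by $W^\pi$. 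Combined with the independence and exponentiality of $\tau_d$, this lets me integrate out both random clocks and rewrite the objective for an admissible $\pi$, call it $J(w;\pi)$, as an expectation over the Brownian path alone in which $\la$ and $\rho\,{\bf 1}_{\{W^\pi<0\}}$ enter as additive killing rates, a mass of $1$ is collected at rate $\rho$ while $W^\pi$ is negative, and the value $\rho/(\la+\rho)$ is collected upon first reaching $-L$ before $c/r$. Equivalently, one may stay on the enlarged filtration $\mathbb{H}^\pi$ and use that the $\mathbb{H}^\pi$-compensator of $K^\pi$ equals $\rho\int_0^{\cdot\wedge\kap^\pi}{\bf 1}_{\{W^\pi_s<0\}}\,\dd s$; the bookkeeping is the same, and the term $\rho(\Psi-1){\bf 1}_{\{w<0\}}$ in \eqref{eq:HJB} is exactly what this clock contributes.

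For $\psi\ge\Psi$: fix an admissible $\pi$ and apply It\^o's formula to $t\mapsto e^{-\la t-\rho A^\pi_t}\,\Psi\big(W^\pi_{t\wedge\tau_0}\big)$, where $\tau_0$ is the first time $W^\pi$ exits $]-L,c/r[$. Since $e^{-\la t-\rho A^\pi_t}$ is continuous of finite variation, the product rule applies; and since $\Psi\in\C^1$ at $0$ with absolutely continuous first derivative, It\^o's formula holds with $\Psi_{ww}$ read a.e., the local-time term at $0$ dropping out because $\Psi_w$ is continuous there and a nondegenerate diffusion occupies $\{0\}$ for zero time. Using the convexity in condition~(1) to insert the infimum and then the HJB equation \eqref{eq:HJB} from condition~(4), the $\dd t$-drift of this process is $\ge -\rho\,{\bf 1}_{\{W^\pi_t<0\}}\,e^{-\la t-\rho A^\pi_t}$. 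Integrating, localizing the stochastic integral by stopping times that keep $W^\pi$ bounded away from $-L$ and $c/r$, taking expectations, running the localization off and then letting $t\to\infty$ (using $0\le\Psi\le1$ for bounded convergence, $e^{-\la t}\to0$ on $\{\tau_0=\infty\}$, continuity of $\Psi$ on $[-L,c/r]$, and conditions~(2)--(3) to evaluate $\Psi$ at $W^\pi_{\tau_0}\in\{-L,c/r\}$), I obtain $\Psi(w)\le J(w;\pi)$. Taking the infimum over admissible $\pi$ gives $\Psi(w)\le\psi(w)$.

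For $\psi\le\Psi$ and optimality: the bracketed expression in \eqref{eq:HJB} is quadratic in $\pi$ with coefficient $\tfrac12\sig^2\Psi_{ww}$, which must be strictly positive on $]-L,0[\,\cup\,]0,c/r[$ (otherwise the infimum in \eqref{eq:HJB} would be $-\infty$), so its unique minimizer is precisely $\pv$ of \eqref{eq:pi}; this $\pv$ is well defined, locally bounded away from $0$, and the prescription $\pv=0$ once $W^*=c/r$ demanded by admissibility condition~(3) is consistent with \eqref{eq:pi} because $\Psi_w(c/r)=0$. After checking that $\pv$ is admissible and that \eqref{eq:wealth} with $\pi=\pv$ admits a solution $W^*$, I repeat the It\^o computation of the previous paragraph with $\pi=\pv$: now the HJB inequality becomes an equality, every inequality collapses, and $\Psi(w)=J(w;\pv)\ge\psi(w)$. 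Combining the two inequalities gives $\psi=\Psi$ on $[-L,c/r]$, and since $J(w;\pv)=\psi(w)$, the strategy $\pv$ is optimal; \eqref{eq:pi} then holds on $]-L,c/r[$ as claimed.

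The hard part will be the first paragraph — rigorously identifying $\rho\,{\bf 1}_{\{W^\pi<0\}}$ as the effective killing rate of the resetting family of excursion clocks (equivalently, computing the $\mathbb{H}^\pi$-compensator of $K^\pi$) — together with the attendant technical care: handling the kink of $\Psi$ at $0$ in the It\^o step, carrying out the localization and uniform-integrability estimates before passing to the limit, and verifying that $\pv$ is admissible near the singular point $w=0$ and near the endpoints $-L$ and $c/r$, where $\Psi_w/\Psi_{ww}$ could in principle be large.
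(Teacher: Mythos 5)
Your proposal is correct and follows the same verification skeleton as the paper (two inequalities, It\^o on the candidate function, sub-martingale plus optional sampling, equality under the feedback control $\pv$), but it implements the Parisian clock differently. The paper never conditions on the Brownian path: it Poissonizes both clocks, introducing a rate-$\la$ Poisson process $N^d$ for death and a rate-$\rho$ Poisson process $N^\rho$ that is ``observed'' only while $W^\pi<0$, sends the state to absorbing points $\De$ and $\K$ at the first relevant jump, and applies the generalized It\^o formula directly to $\Psi(W^\pi_{t\wedge\tau\wedge\tau_n})$ with explicit jump integrals against $\dd N^d$ and $\dd N^\rho$; the terms $\la\Psi$ and $\rho(\Psi-1){\bf 1}_{\{w<0\}}$ in the HJB equation then arise from the compensators $\la t$ and $\rho\int{\bf 1}_{\{W^\pi_s<0\}}\dd s$. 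You instead integrate the clocks out first via the identity $\P(\kap^\pi>t\mid\F_\infty)=e^{-\rho A^\pi_t}$ and work with the multiplicative functional $e^{-\la t-\rho A^\pi_t}\Psi(W^\pi_{t\wedge\tau_0})$ on the Brownian filtration. The two devices are equivalent, and you correctly flag the one genuinely delicate point --- identifying $\rho\,{\bf 1}_{\{W^\pi<0\}}$ as the effective hazard of the resetting family of excursion clocks in the sense of definition \eqref{eq:expPar_ruin} --- which the paper in effect assumes by fiat when it declares that a counted jump of $N^\rho$ \emph{is} Parisian ruin (the equivalence is only confirmed indirectly later via the cited Landriault et al.\ formula). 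Your approach buys a cleaner filtration (no enlargement by $K^\pi$ needed in the It\^o step) at the cost of having to prove that conditional-survival identity; the paper's approach avoids that lemma but quietly replaces the original model by the Poissonized one. Minor differences --- your localization by exit times versus the paper's $\tau_n=\inf\{t:\int_0^t\pi_s^2\,\dd s\ge n\}$, and your explicit remark that the local time at $0$ drops out because $\Psi_w$ is continuous there --- are immaterial.
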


\begin{proof}
Assume that $\Psi$ satisfies the conditions specified in the statement of this theorem. Let $N^d$ denote a Poisson process with rate $\la$ that is independent of the standard Brownian motion $B$ driving the wealth process. The occurrence of a jump in $N^d$ represents the death of the individual.

Let $\pi: \R \to \R$ be a function, and let $W^\pi$ and $Z^\pi$ denote the wealth and minimum wealth, respectively, when the individual follows the investment policy $\pi_t = \pi(W_t)$.  Assume that this investment policy is admissible.  Let $N^\rho$ denote a Poisson process with rate $\rho$ that is "observed" whenever $W^\pi$ is negative.  If $N^\rho$ jumps (while $W^\pi$ is negative), then exponential Parisian ruin has occurred.

Define two additional states $\De$ and $\K$.  The wealth process is killed (and sent to $\De$) as soon as the Poisson process $N^d$ jumps (that is, when the individual dies), and we assign $W^\pi_{\tau_d} = \De$; recall that $\tau_d$ is the random time of death.  Similarly, the wealth process is killed (and sent to $\K$) as soon as the Poisson process $N^\rho$ jumps (that is, when exponential Parisian ruin occurs), and we assign $W^\pi_{\kap^\pi} = \K$; recall that $\kap^\pi$ is the random time of exponential Parisian ruin.  Given a function $g$ on $[-L, c/r]$, we extend it to the states $\De$ and $\K$ by defining $g(\De) = 0$ and $g(\K) = 1$.

Define the stopping time $\tau = \tau_d \wedge \kap^\pi \wedge \tau_{c/r} \wedge \tau_L$, in which $\tau_{c/r} = \inf\{t \ge 0: W^\pi_t \ge c/r \}$ and $\tau_L = \inf\{ t \ge 0: W^\pi_t \le -L \}$.  Also, define $\tau_n = \inf \{t \ge 0:  \int_0^t \pi^2_s \, \dd s \ge n \}$.  By applying a generalized It\^o's formula for convex functions to $\Psi \big(W^\pi_{t \wedge \tau \wedge \tau_n} \big)$, we have
\begin{align}\label{eq:Ito}
&\Psi \big(W^\pi_{t \wedge \tau \wedge \tau_n} \big) = \Psi(w) +\int_0^{t \wedge \tau \wedge \tau_n}  \left( \big(r W^\pi_s + (\mu-r) \pi_s - c \big) \, \Psi_w(W^\pi_s) + \dfrac{1}{2} \,\sig^2 \pi_s^2 \, \Psi_{ww}(W^\pi_s) \right) \dd s \\
& \quad + \int_0^{t \wedge \tau \wedge \tau_n} \Psi_w(W^\pi_s) \, \sig \pi_s \, \dd B_s + \int_0^{t \wedge \tau \wedge \tau_n} \big( 0 - \Psi(W^\pi_s) \big) \dd N^d_s + \int_0^{t \wedge \tau \wedge \tau_n} \big( 1 - \Psi(W^\pi_s) \big) {\bf 1}_{\{W^\pi_s < 0\}} \, \dd N^\rho_s.\notag
\end{align}
The definition of $\tau_n$ implies that the expectation of the second integral is 0.  Now, $\{N^d_t - \la t\}_{t \ge 0}$ and $\{N^\rho_t - \rho t\}_{t \ge 0}$ are two $\mathbb{H}^\pi$-martingales.  Thus, because $\Psi$ is bounded, the expectations of the third and fourth integrals equal, respectively,
\[
- \la \E^w \left[ \int_0^{t \wedge \tau \wedge \tau_n} \Psi(W^\pi_s) \, \dd s \right],
\]
and
\[
- \rho \E^w \left[ \int_0^{t \wedge \tau \wedge \tau_n} \big( \Psi(W^\pi_s) - 1 \big) {\bf 1}_{\{W^\pi_s < 0\}} \, \dd s \right].
\]
Thus, we have
\begin{align}\label{eq:3.9}
\E^w \left[ \Psi \big(W^\pi_{t \wedge \tau \wedge \tau_n} \big) \right] &= \Psi(w) - \la \E^w \left[ \int_0^{t \wedge \tau \wedge \tau_n} \Psi(W^\pi_s) \, \dd s \right] - \rho \E^w \left[ \int_0^{t \wedge \tau \wedge \tau_n} \big( \Psi(W^\pi_s) - 1 \big) {\bf 1}_{\{W^\pi_s < 0\}} \, \dd s \right] \notag \\
&\quad + \E^w \left[ \int_0^{t \wedge \tau \wedge \tau_n} \left( \big(rW^\pi_s + (\mu-r) \pi_s - c \big) \, \Psi_w(W^\pi_s) + \frac{1}{2} \,\sig^2 \pi_s^2 \, \Psi_{ww}(W^\pi_s) \right) \dd s \right] \notag \\
&\ge \Psi(w),
\end{align}
in which the inequality follows from condition 4 of the theorem.  Because $\Psi$ is bounded, it follows from the dominated convergence theorem that
\begin{equation}\label{eq:sub-mart}
\E^w \left[ \Psi \big(W^\pi_{t \wedge \tau} \big) \right] \ge \Psi(w).
\end{equation}
The same argument that gives us inequality \eqref{eq:sub-mart} shows that $\left\{ \Psi \big(W^\pi_{t \wedge \tau} \big) \right\}_{t \ge 0}$ is a $\mathbb{H}^\pi$-sub-martingale for any admissible strategy $\pi$.

From $\Psi(W^\pi_{\tau_d}) = \Psi(\De) = 0$, $\Psi(W^\pi_{\kap^\pi}) = \Psi(\K) = 1$, $\Psi(W^\pi_{\tau_{c/r}}) = \Psi(c/r) = 0$, and \hfill \break $\Psi(W^\pi_{\tau_{L}}) = \Psi(-L) = \rho/(\la + \rho)$, it follows that (dropping the superscript $\pi$ from $\kap$ for simplicity)
\begin{align}\label{eq:m}
\Psi \big(W^\pi_\tau \big) &= \Psi(W^\pi_{\tau_d}) {\bf 1}_{\{ \tau_d < (\kap \wedge \tau_{c/r} \wedge \tau_L) \}} + \Psi(W^\pi_\kap) {\bf 1}_{\{ \kap < (\tau_d \wedge \tau_{c/r} \wedge \tau_L) \}} \notag \\
&\quad + \Psi(W^\pi_{\tau_{c/r}}) {\bf 1}_{\{ \tau_{c/r} < (\tau_d \wedge \kap \wedge \tau_L) \}} + \Psi(W^\pi_{\tau_L}) {\bf 1}_{\{ \tau_L < (\tau_d \wedge \kap \wedge \tau_{c/r}) \}}  \notag \\
&= {\bf 1}_{\{ \kap < (\tau_d \wedge \tau_{c/r} \wedge \tau_L) \}} + \dfrac{\rho}{\la + \rho} \, {\bf 1}_{\{ \tau_L < (\tau_d \wedge \kap \wedge \tau_{c/r}) \}}  \notag \\
&= {\bf 1}_{\{ \kap < (\tau_d \wedge \tau_L) \}} + \dfrac{\rho}{\la + \rho} \, {\bf 1}_{\{ \tau_L < (\tau_d \wedge \kap) \}}  \notag \\
&= {\bf 1}_{\{\kap < \tau_d \}} {\bf 1}_{\{ Z^\pi_{\kap \wedge \tau_d} > -L \}} + \dfrac{\rho}{\la + \rho} \, {\bf 1}_{\{ Z^\pi_{\kap \wedge \tau_d} \le -L \}},
\end{align}
in which the third line follows because if wealth reaches $c/r$ at, say, time $t$, then $W^\pi_s = c/r$ for all $s \ge t$; recall that $\pi_s = 0$ for all $s \ge t$.  By taking the expectation of both sides of \eqref{eq:m}, we obtain
\begin{equation}\label{eq:3.12}
\E^w \left[{\bf 1}_{\{\kap < \tau_d \}} {\bf 1}_{\{ Z^\pi_{\kap \wedge \tau_d} > -L \}} + \dfrac{\rho}{\la + \rho} \, {\bf 1}_{\{ Z^\pi_{\kap \wedge \tau_d} \le -L \}} \right] = \E^w \left[ \Psi \big(W^\pi_\tau \big) \right] \ge \Psi(w).
\end{equation}
The inequality in \eqref{eq:3.12} follows from an application of the optional sampling theorem because \break $\{ \Psi(W^\pi_{t \wedge \tau}) \}_{t \ge 0}$ is a sub-martingale and $\sup_{t \ge 0} \E^w \big[\Psi(W^\pi_{t \wedge \tau}) \big] < \infty$.  By taking the infimum in \eqref{eq:3.12} over all admissible investment strategies, we obtain
\begin{equation}\label{eq:ineq}
\inf_\pi \E^w \left[ {\bf 1}_{\{\kap < \tau_d \}} {\bf 1}_{\{ Z^\pi_{\kap \wedge \tau_d} > -L \}} + \dfrac{\rho}{\la + \rho} \, {\bf 1}_{\{ Z^\pi_{\kap \wedge \tau_d} \le -L \}} \right] \ge \Psi(w).
\end{equation}
Now, by \eqref{eq:psi}, the left-hand side of \eqref{eq:ineq} equals $\psi(w)$; thus, $\psi(w) \ge \Psi(w)$.

If the individual follows a strategy $\pv$ that minimizes the right-hand side of \eqref{eq:HJB}, then \eqref{eq:3.9} is satisfied with equality, and applying the dominated convergence theorem yields
\begin{equation}
\E^w \Big[ \Psi \big(W^{\pv}_{t \wedge \tau} \big) \Big] = \Psi(w),
\end{equation}
which implies that $\{\Psi(W^{\pv}_{t \wedge \tau}) \}_{t \ge 0}$ is a martingale.  By following the same line of argument as above, we obtain
\begin{equation}
\psi(w) = \Psi(w),
\end{equation}
which demonstrates that \eqref{eq:verif} holds and that $\pv$ in \eqref{eq:pi} is an optimal investment strategy for wealth lying in $]-L, c/r[$.
\end{proof}

We end this section with a brief presentation of the related problem analyzed by Bayraktar and Young \cite{8.}, so that the reader can compare the two corresponding HJB equations.  Define the process $A^\pi = \{A^\pi_t\}_{t \ge 0}$ by
\begin{equation}\label{eq:A}
A^\pi_t = A^\pi_0 + \int_0^t {\bf 1}_{\{W^\pi_s < 0\}} \, \dd s, \quad A^\pi_0 = a \ge 0,
\end{equation}
and define the value function $M$ by
\begin{equation}\label{eq:M-L}
M(w, a) = \inf_\pi \E^{w, a} \left[ A^\pi_{\tau_d} \, {\bf 1}_{\{Z^\pi_{\tau_d} > -L \}} + \left( A^\pi_{\tau_L} + \frac{1}{\la} \right)  {\bf 1}_{\{Z^\pi_{\tau_d} \le -L \}} \right],
\end{equation}
in which we take the infimum over admissible investment strategies.  For large values of $L$, $M$ approximates the minimum expected occupation time of wealth below $0$.  Bayraktar and Young \cite{8.} showed that $M$ is given by
\[
M(w, a) = m(w) + a,
\]
in which $m$ is the unique classical solution of the following BVP on $[-L, c/r]$:
\begin{equation}\label{eq:HJB_m}
\begin{cases}
\la m(w) = {\bf 1}_{\{w < 0\}} + (rw - c) m_w(w) + \inf_{\pi} \left[(\mu-r)\pi m_w(w) + \dfrac{1}{2}\sig^2 \pi^{2} m_{ww}(w) \right], \\
m(-L) = 1/\la, \qquad m(c/r) = 0.
\end{cases}
\end{equation}
The function $m$ appears later in this paper in Section \ref{sec:ext_2}; specifically, we show that for small values of $\rho$, $\psi(w) = \rho m(w) + O(\rho^2)$ uniformly on $[-L, c/r]$.

{\bf \subsection{Solving for $\psi$ via a Related Free-Boundary Problem}}\label{sec:FBP}

In this section, we introduce a free-boundary problem (FBP) whose concave solution is the dual of $\psi$ by the Legendre transform.  To obtain the FBP, we begin with the boundary-value problem (BVP) as stated in the verification theorem, Theorem \ref{thm:verif}.
\begin{equation}\label{eq:psi_BVP}
\begin{cases}
\la \Psi + \rho( \Psi - 1) {\bf 1}_{\{w < 0\}} = (rw - c) \Psi_w + \inf \limits_\pi \left[(\mu-r)\pi \Psi_w + \dfrac{1}{2} \, \sig^2 \pi^2 \Psi_{ww} \right] , \\
\Psi(-L) = \dfrac{\rho}{\la + \rho} \, , \quad \Psi(c/r) = 0.
\end{cases}
\end{equation}
We hypothesize that the solution of this BVP is convex; thus, we can define its concave Legendre transform $\hpsi$ as follows: define $y = - \Psi_w$ and $\hpsi(y) = \Psi(w) + wy$.  Furthermore, based on related work, we suppose $\Psi_w(c/r) = 0$.  Finally, we define $\yo = - \Psi_w(0)$ and $\yL = - \Psi_w(-L)$.  We deduce the following linear FBP for $\hpsi$ for $0 \le y \le \yL$:
\begin{equation}\label{eq:hpsi_FBP}
\begin{cases}
\big( \la + \rho {\bf 1}_{\{ y > \yo \}} \big) \hpsi = -\big(r - \la - \rho {\bf 1}_{\{ y > \yo \}} \big) y \hpsi_y + \del y^2 \hpsi_{yy} + c y + \rho {\bf 1}_{\{ y > \yo \}}, \vspace{0.5ex} \\
\hpsi(0) = 0 \; \hbox{ and }  \hpsi_y(\yo) = 0, \vspace{0.5ex} \\
\hpsi(\yL) = \dfrac{\rho}{\la + \rho} - L \yL \; \hbox{ and } \hpsi_y(\yL) = - L,
\end{cases}
\end{equation}
in which $\del$ equals
\begin{equation}\label{eq:del}
\del = \dfrac{1}{2} \left( \dfrac{\mu - r}{\sig} \right)^2.
\end{equation}
We next solve this FBP; after that, we show that the convex dual of the solution of the FBP, indeed, equals $\psi$.

To solve the FBP in \eqref{eq:hpsi_FBP}, we consider the problem on the two intervals:  (1) $0 \le y \le \yo$, and (2) $\yo < y \le \yL$.  After solving the FBP on each interval separately, we impose value matching and smooth pasting at $y = \yo$ to determine $\yo$.

First, suppose $0 \le y \le \yo$; on this interval, $\hpsi$ solves
\begin{equation}\label{eq:hpsi_FBP1}
\begin{cases}
\la \hpsi = - (r - \la) y \hpsi_y + \del y^2 \hpsi_{yy} + c y, \vspace{0.5ex} \\
\hpsi(0) = 0 \; \hbox{ and }  \hpsi_y(\yo) = 0.
\end{cases}
\end{equation}
The general solution of the differential equation in \eqref{eq:hpsi_FBP1} is of the form
\begin{equation}\label{eq:hpsi_sol1}
\hpsi(y) = D_1 y^{B_1} + D_2 y^{B_2} + \dfrac{c}{r} \, y,
\end{equation}
in which $D_1$ and $D_2$ are constants to be determined, and $B_1$ and $B_2$ are given by
\begin{equation}\label{eq:B1}
B_1 = \dfrac{1}{2 \del} \left[ (r - \la + \del) + \sqrt{(r - \la + \del)^2 + 4 \del \la} \, \right] > 1,
\end{equation}
and
\begin{equation}\label{eq:B2}
B_2 = \dfrac{1}{2 \del} \left[ (r - \la + \del) - \sqrt{(r - \la + \del)^2 + 4 \del \la} \, \right] < 0.
\end{equation}

The boundary condition at $y = 0$ implies that $D_2 = 0$.  The boundary condition at $y = \yo$ implies that
\begin{equation}
0 = \hpsi_y(\yo) = D_1 B_1 \yo^{B_1 - 1} + \dfrac{c}{r},
\end{equation}
which gives us a relationship between $D_1$ and $\yo$, specifically,
\begin{equation}\label{eq:D1y0}
D_1 = - \, \dfrac{c}{r B_1} \, \yo^{1 - B_1}.
\end{equation}
We have, therefore, shown that on $[0, \yo]$, $\hpsi$ is given by
\begin{equation}\label{eq:hatm1}
\hpsi(y) =  \dfrac{c}{r}  \left[ y - \dfrac{\yo}{B_1} \left( \dfrac{y}{\yo} \right)^{B_1} \right],
\end{equation}
with $\yo$ to be determined.

Next, consider $\yo < y \le \yL$; on this interval, $\hpsi$ solves
\begin{equation}\label{eq:hpsi_FBP2}
\begin{cases}
(\la + \rho) \hpsi = - (r - \la - \rho) y \hpsi_y + \del y^2 \hpsi_{yy} + c y + \rho, \vspace{1ex} \\
\hpsi(\yL) = \dfrac{\rho}{\la + \rho} - L \yL \; \hbox{ and } \hpsi_y(\yL) = - L, \vspace{1.5ex} \\
\hpsi(\yo+) = \dfrac{c}{r} \, \dfrac{B_1 - 1}{B_1} \, \yo \; \hbox{ and } \hpsi_y(\yo+) = 0.
\end{cases}
\end{equation}
The general solution of the differential equation in \eqref{eq:hpsi_FBP2} is of the form
\begin{equation}\label{eq:hpsi_sol2}
\hpsi(y) = D_3 y^{B_3} + D_4 y^{B_4} + \dfrac{c}{r} \, y + \dfrac{\rho}{\la + \rho} \, ,
\end{equation}
in which $D_3$ and $D_4$ are constants to be determined, and $B_3$ and $B_4$ are given by
\begin{equation}\label{eq:B3}
B_3 = \dfrac{1}{2 \del} \left[ (r - \la - \rho + \del) + \sqrt{(r - \la - \rho + \del)^2 + 4 \del (\la + \rho)} \, \right],
\end{equation}
with $B_1 > B_3 > 1$, and
\begin{equation}\label{eq:B4}
B_4 = \dfrac{1}{2 \del} \left[ (r - \la - \rho + \del) - \sqrt{(r - \la - \rho + \del)^2 + 4 \del (\la + \rho)} \, \right],
\end{equation}
with $0 > B_2 > B_4$.
\\ \indent
When we use the expression for $\hpsi$ in \eqref{eq:hpsi_sol2}, and the four free-boundary conditions in \eqref{eq:hpsi_FBP2}, we obtain four equations for the four unknowns $D_3$, $D_4$, $\yo$, and $\yL$.  These four equations are as follows:
\begin{equation}\label{eq:eq1}
D_3 \yo^{B_3} + D_4 \yo^{B_4} + \dfrac{c}{r} \, \yo + \dfrac{\rho}{\la + \rho} = \dfrac{c}{r} \, \dfrac{B_1 - 1}{B_1} \, \yo ,
\end{equation}
\begin{equation}\label{eq:eq2}
D_3 B_3 \yo^{B_3} + D_4 B_4 \yo^{B_4} + \dfrac{c}{r} \, \yo = 0 ,
\end{equation}
\begin{equation}\label{eq:eq3}
D_3 \yL^{B_3} + D_4 \yL^{B_4} + \dfrac{c}{r} \, \yL + \dfrac{\rho}{\la + \rho} = \dfrac{\rho}{\la + \rho}  - L \yL ,
\end{equation}
and
\begin{equation}\label{eq:eq4}
D_3 B_3 \yL^{B_3} + D_4 B_4 \yL^{B_4} + \dfrac{c}{r} \, \yL = - L \yL .
\end{equation}
Equations \eqref{eq:eq1} and \eqref{eq:eq2} imply the following expressions for $D_3$ and $D_4$ in terms of $\yo$:
\begin{equation}\label{eq:D3_1}
D_3 = - \, \dfrac{B_1 - B_4}{B_1(B_3 - B_4)} \, \dfrac{c}{r} \, \yo^{1 - B_3} + \dfrac{B_4}{B_3 - B_4} \, \dfrac{\rho}{\la + \rho} \, \yo^{- B_3},
\end{equation}
and
\begin{equation}\label{eq:D4_1}
D_4 = \dfrac{B_1 - B_3}{B_1(B_3 - B_4)} \, \dfrac{c}{r} \, \yo^{1 - B_4} - \dfrac{B_3}{B_3 - B_4} \, \dfrac{\rho}{\la + \rho} \, \yo^{- B_4}.
\end{equation}
Similarly, \eqref{eq:eq3} and \eqref{eq:eq4} imply the following expression for $D_3$ and $D_4$ in terms of $\yL$:
\begin{equation}\label{eq:D3_2}
D_3 = - \, \dfrac{1 - B_4}{B_3 - B_4} \left( \dfrac{c}{r} + L \right) \yL^{1 - B_3} < 0,
\end{equation}
and
\begin{equation}\label{eq:D4_2}
D_4 = - \, \dfrac{B_3 - 1}{B_3 - B_4} \left( \dfrac{c}{r} + L \right) \yL^{1 - B_4} < 0.
\end{equation}
If we equate the expressions for $D_3$ in \eqref{eq:D3_1} and \eqref{eq:D3_2} and isolate $\yo$, we obtain
\begin{equation}\label{eq:yo_1}
\dfrac{\rho}{\la + \rho} \, \dfrac{1}{\yo} = - \, \dfrac{1 - B_4}{B_4} \left( \dfrac{c}{r} + L \right) \left( \dfrac{\yo}{\yL} \right)^{B_3 - 1} + \dfrac{B_1 - B_4}{B_1 B_4} \, \dfrac{c}{r} \, .
\end{equation}
Similarly, if we equate the expressions for $D_4$ in \eqref{eq:D4_1} and \eqref{eq:D4_2} and isolate $\yo$, we obtain
\begin{equation}\label{eq:yo_2}
\dfrac{\rho}{\la + \rho} \, \dfrac{1}{\yo} =  \dfrac{B_3 - 1}{B_3} \left( \dfrac{c}{r} + L \right) \left( \dfrac{\yo}{\yL} \right)^{B_4 - 1} + \dfrac{B_1 - B_3}{B_1 B_3} \, \dfrac{c}{r} \, .
\end{equation}
Finally, if we equate the expressions for $\yo$ in \eqref{eq:yo_1} and \eqref{eq:yo_2}, we deduce that $\yoL = \yo/\yL$ is a zero of $g$ in
$]0, 1[\,$, in which $g$ is defined by
\begin{equation}\label{eq:g}
g(z) = \left( \dfrac{c}{r} + L \right) \left\{ \dfrac{B_3(1 - B_4)}{B_3 - B_4} \, z^{B_3 - 1} + \dfrac{B_4(B_3 - 1)}{B_3 - B_4} \, z^{B_4 - 1} \right\} - \dfrac{c}{r} \, .
\end{equation}
It remains to show that $g$ has a unique zero in $]0, 1[$ and that $\hpsi$ is concave; we demonstrate both in the proof of the following proposition.

\begin{prop}\label{prop:hpsi_FBP}
The solution $\hpsi$ of the free-boundary problem in \eqref{eq:hpsi_FBP} is given by
\begin{equation}\label{eq:hpsi}
\hpsi(y) =
\begin{cases}
\dfrac{c}{r} \left[ y - \dfrac{\yo}{B_1} \left( \dfrac{y}{\yo} \right)^{B_1} \right], &0 \le y \le \yo, \vspace{1.5ex} \\
\dfrac{\rho}{\la + \rho} - \left\{ \left( \dfrac{c}{r} + L \right) \, \yL \left[ \dfrac{1 - B_4}{B_3 - B_4} \left( \dfrac{y}{\yL} \right)^{B_3}  + \dfrac{B_3 - 1}{B_3 - B_4}  \left( \dfrac{y}{\yL} \right)^{B_4} \right] -  \dfrac{c}{r} \, y \right\}, &\yo < y \le \yL.
\end{cases}
\end{equation}
The ratio of the free boundaries $\yoL = \yo/\yL$ is the unique zero of $g$ in $]0, 1[ \, $, in which $g$ is defined by \eqref{eq:g}.  Then, as a function of $\yoL$, $\yo$ is given by \eqref{eq:yo_1} or \eqref{eq:yo_2}, and $\yL = \yo/\yoL$.

Moreover, $\hpsi$ in \eqref{eq:hpsi} is twice continuously differentiable and strictly concave on $]0, \yL[$, except at $y = y_0$, where it is continuously differentiable with left- and right-second derivatives.
\end{prop}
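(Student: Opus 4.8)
The algebra preceding the proposition already supplies almost everything: it reduces the FBP \eqref{eq:hpsi_FBP} to the explicit candidate \eqref{eq:hpsi} on the two subintervals $[0,\yo]$ and $]\yo,\yL]$, to the relations \eqref{eq:yo_1}--\eqref{eq:yo_2} expressing $\yo$ through $\yoL=\yo/\yL$, and to the single consistency equation $g(\yoL)=0$. So the plan is: (i) show $g$ has a unique zero in $]0,1[$; (ii) check the induced free boundaries are positive with $\yo<\yL$; (iii) verify that the resulting $\hpsi$ actually solves \eqref{eq:hpsi_FBP}, in particular that it is $C^1$ at $\yo$; and (iv) establish the stated regularity and strict concavity. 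For step (i), I would differentiate \eqref{eq:g}:
\[
g'(z)=\left(\frac cr+L\right)\left\{\frac{B_3(1-B_4)(B_3-1)}{B_3-B_4}\,z^{B_3-2}+\frac{B_4(B_3-1)(B_4-1)}{B_3-B_4}\,z^{B_4-2}\right\}.
\]
Using $B_1>B_3>1$ and $B_4<0$ (so $B_3-B_4>0$, $1-B_4>0$, $B_3-1>0$, $B_4-1<0$), both bracketed coefficients are strictly positive, hence $g'>0$ on $]0,\infty[\,$. Since the coefficient of $z^{B_4-1}$ in \eqref{eq:g} is negative and $B_4-1<0$, we get $g(0^+)=-\infty$, while $\frac{B_3(1-B_4)+B_4(B_3-1)}{B_3-B_4}=1$ collapses $g(1)$ to $L>0$. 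Therefore $g$ has a unique zero $\yoL\in\,]0,1[\,$.

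\textbf{Free boundaries and verification.} I would define $\yo$ by \eqref{eq:yo_2} evaluated at $z=\yoL$: since $B_3>1$, $\tfrac cr+L>0$, and $B_1>B_3$, the right-hand side of \eqref{eq:yo_2} is a sum of two strictly positive terms, so $\yo>0$, and because $\yoL$ is the zero of $g$, formula \eqref{eq:yo_1} returns the same value. Setting $\yL=\yo/\yoL$ gives $\yL>\yo>0$, so $]\yo,\yL[$ is a genuine interval. With these, $D_1,D_2,D_3,D_4$ are read off exactly as in the text ($D_2=0$ from $\hpsi(0)=0$, $D_1$ from \eqref{eq:D1y0}, and $D_3,D_4$ from \eqref{eq:D3_2}--\eqref{eq:D4_2}), producing \eqref{eq:hpsi}. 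To verify the FBP: on each subinterval $\hpsi$ is a linear combination of the fundamental solutions of the relevant Euler-type ODE plus a particular solution, so it solves the ODE in \eqref{eq:hpsi_FBP} there; the conditions $\hpsi(0)=0$, $\hpsi(\yL)=\tfrac{\rho}{\la+\rho}-L\yL$, $\hpsi_y(\yL)=-L$ follow by direct substitution in \eqref{eq:hpsi} (the last two telescoping via $\tfrac{1-B_4}{B_3-B_4}+\tfrac{B_3-1}{B_3-B_4}=1$ and $g(1)=L$). For the interior conditions at $\yo$: the lower branch gives $\hpsi_y(\yo-)=\tfrac cr(1-1)=0$ and $\hpsi(\yo-)=\tfrac cr\,\tfrac{B_1-1}{B_1}\,\yo$; on the upper branch a short computation yields $\hpsi_y(y)=-g(y/\yL)$, whence $\hpsi_y(\yo+)=-g(\yoL)=0$, and $\hpsi(\yo+)=\tfrac cr\,\tfrac{B_1-1}{B_1}\,\yo$ because this is precisely \eqref{eq:eq1}, which holds here since \eqref{eq:D3_1}=\eqref{eq:D3_2} and \eqref{eq:D4_1}=\eqref{eq:D4_2} are equivalent to \eqref{eq:yo_1} and \eqref{eq:yo_2}, both valid at $\yoL$. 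Hence $\hpsi$ solves \eqref{eq:hpsi_FBP} and is $C^1$ at $\yo$.

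\textbf{Regularity and concavity.} Each branch of \eqref{eq:hpsi} is $C^\infty$ on its open interval, so $\hpsi\in\C^2\big(]0,\yL[\setminus\{\yo\}\big)$, and by the previous step $\hpsi$ is $C^1$ at $\yo$ with one-sided second derivatives. On $]0,\yo[$ we have $\hpsi_{yy}(y)=-\tfrac cr\,\tfrac{B_1-1}{\yo}\,(y/\yo)^{B_1-2}<0$; on $]\yo,\yL[$ the identity $\hpsi_y=-g(\,\cdot/\yL)$ gives $\hpsi_{yy}(y)=-\tfrac1{\yL}\,g'(y/\yL)<0$ by the first step; and the left- and right-second derivatives at $\yo$ equal $-\tfrac cr\,\tfrac{B_1-1}{\yo}$ and $-\tfrac1{\yL}\,g'(\yoL)$, both strictly negative. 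Hence $\hpsi$ is strictly concave on $]0,\yL[$ with the claimed regularity.

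\textbf{Expected obstacle.} There is no deep step here; the one genuinely useful observation is that $\hpsi_y=-g(\,\cdot/\yL)$ on the upper interval, which simultaneously produces the smooth-fit equation $g(\yoL)=0$ and reduces strict concavity on $]\yo,\yL[$ to $g'>0$, the latter being a one-line sign count from $B_1>B_3>1>0>B_4$. The only place needing care is keeping the bookkeeping between the two parametrizations of $D_3,D_4$ — namely \eqref{eq:D3_1}--\eqref{eq:D4_1} in terms of $\yo$ versus \eqref{eq:D3_2}--\eqref{eq:D4_2} in terms of $\yL$ — consistent, so that value matching at $\yo$ is seen to be equivalent to $g(\yoL)=0$.
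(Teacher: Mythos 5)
Your proposal is correct and follows essentially the same route as the paper: uniqueness of the zero of $g$ via $g(0^+)=-\infty$, $g(1)=L>0$, and $g'>0$ (a sign count from $B_3>1>0>B_4$), together with strict concavity read off from the explicit $\hpsi_{yy}$ on each branch. Your extra bookkeeping (positivity and ordering of the free boundaries, value matching at $\yo$, and the tidy identity $\hpsi_y=-g(\cdot/\yL)$ on the upper interval) merely spells out what the paper dismisses as ``by construction,'' and the concavity check via $-g'(y/\yL)/\yL<0$ is the same computation in different packaging.
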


\begin{proof}
By construction $\hpsi$ in \eqref{eq:hpsi} solves the FBP in \eqref{eq:hpsi_FBP} and is twice continuously differentiable, except at $y = y_0$, where it is continuously differentiable with left- and right-second derivatives.

Next, we show that $g$ has a unique zero in $]0, 1[ \,$.  Note that $\lim_{z \downarrow 0} g(z) = - \infty$ and $g(1) = L > 0$; thus, it is enough to show that $g$ increases on $]0, 1[ \,$.  Differentiate $g$ to obtain
\begin{equation}\label{eq:gz}
g_z(z) = \left( \dfrac{c}{r} + L \right) \left\{ \dfrac{B_3(B_3 - 1)(1 - B_4)}{B_3 - B_4} \, z^{B_3 - 2} - \dfrac{B_4(1 - B_4)(B_3 - 1)}{B_3 - B_4} \, z^{B_4 - 2} \right\} ,
\end{equation}
which is clearly positive because $B_3 > 1$ and $B_4 < 0$; thus, $g$ increases on $]0, 1[$ and has a unique zero in $]0, 1[ \, $.

It remains to show that $\hpsi$ is concave on $[0, \yL]$.  Observe that
\begin{equation}\label{eq:hpsi_yy}
\hpsi_{yy}(y) =
\begin{cases}
- \, \dfrac{c}{r} \, \dfrac{B_1 - 1}{\yo} \left( \dfrac{y}{\yo} \right)^{B_1 - 2}, &\quad 0 < y \le \yo, \vspace{1ex} \\
- \left( \dfrac{c}{r} + L \right) \, \dfrac{1}{\yL} \, \dfrac{(B_3 - 1)(1 - B_4)}{B_3 - B_4} \left[ B_3 \left( \dfrac{y}{\yL} \right)^{B_3 - 2}  - B_4 \left( \dfrac{y}{\yL} \right)^{B_4 - 2} \right], &\quad \yo < y \le \yL ,
\end{cases}
\end{equation}
from which we see that $\hpsi_{yy} < 0$; thus, $\hpsi$ is strictly concave.
\end{proof}

We, next, observe that the Legendre transform (see, for example, \cite{17.}) of the solution of the FBP \eqref{eq:hpsi_FBP} is, in fact, the minimum probability of lifetime exponential Parisian ruin $\psi$ for $w \in [-L, c/r]$.  Recall from Proposition \ref{prop:hpsi_FBP} that $\hpsi$ is concave on $[0, \yL]$; thus, we can define its convex dual via the Legendre transform.  Specifically, for $w \in [-L, c/r]$, define
\begin{equation}\label{eq:Leg}
\Psi(w) = \max_{y \ge 0} \left( \hpsi(y) - wy \right).
\end{equation}
By construction, $\Psi$ satisfies the conditions of Theorem \ref{thm:verif} (see the proof of Theorem \ref{thm:psi} below); thus, $\Psi$ equals $\psi$, the minimum probability of lifetime exponential Parisian ruin.

The following theorem gives explicit expressions for $\psi$ and the corresponding optimal investment strategy.

\begin{thm}\label{thm:psi}
Let $\Psi$ be defined by \eqref{eq:Leg}, in which $\hpsi$ is given in \eqref{eq:hpsi} in Proposition $\ref{prop:hpsi_FBP}$.  Then, the minimum probability of lifetime exponential Parisian ruin $\psi$ defined in \eqref{eq:psi} equals the decreasing, convex function $\Psi$ on $[-L, c/r]$.  Specifically, $\psi$ is given by
\begin{equation}\label{eq:psi_sol}
\psi(w) =
\begin{cases}
\dfrac{\rho}{\la + \rho} - \left( \dfrac{c}{r} + L \right) \, \yL \, \dfrac{(B_3 - 1)(1 - B_4)}{B_3 - B_4} \left[ \left( \dfrac{y}{\yL} \right)^{B_4}  - \left( \dfrac{y}{\yL} \right)^{B_3}  \right] , &\quad -L \le w < 0, \vspace{1.5ex} \\
\dfrac{c \yo}{rq} \left( 1 - \dfrac{rw}{c} \right)^q , &\qquad 0 \le w \le \dfrac{c}{r} \, ,
\end{cases}
\end{equation}
in which $y \in \; ]\yo, \yL]$ in the first line of \eqref{eq:psi_sol} uniquely solves
\begin{equation}\label{eq:yw}
w = - \left( \dfrac{c}{r} + L \right) \left[ \dfrac{B_3(1 - B_4)}{B_3 - B_4} \left( \dfrac{y}{\yL} \right)^{B_3 - 1}  + \dfrac{B_4(B_3 - 1)}{B_3 - B_4}  \left( \dfrac{y}{\yL} \right)^{B_4 - 1} \right]  + \dfrac{c}{r} \, ,
\end{equation}
and $q$ in the second line equals
\begin{equation}\label{eq:q}
q = \dfrac{B_1}{B_1 - 1} = \dfrac{1}{2r} \, \Big[ (r + \la + \del) + \sqrt{(r + \la + \del)^2 - 4 r \la} \, \Big] > 1.
\end{equation}
Here, $D_3$, $D_4$, $\yo$, and $\yL$ are as stated in Proposition $\ref{prop:hpsi_FBP}$. The optimal investment strategy $\pv$ is given in feedback form by \eqref{eq:pi}.  By slightly abusing notation, we write $\pv_t = \pv(W^*_t)$, in which $W^*$ is optimally controlled wealth and
\begin{equation}\label{eq:pi_sol}
\pv(w) =
\begin{cases}
\dfrac{\mu - r}{\sig^2} \left( \dfrac{c}{r} + L \right) \dfrac{(B_3 - 1)(1 - B_4)}{B_3 - B_4} \left[ B_3 \left( \dfrac{y}{\yL} \right)^{B_3 - 1}  - B_4 \left( \dfrac{y}{\yL} \right)^{B_4 - 1} \right] , &\quad -L < w < 0,  \vspace{1.5ex} \\
\dfrac{\mu - r}{\sig^2} \, \dfrac{c/r - w}{q - 1} \, , &\qquad 0 < w \le \dfrac{c}{r} \, .
\end{cases}
\end{equation}
\end{thm}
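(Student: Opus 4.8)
The plan is to apply the verification theorem, Theorem~\ref{thm:verif}, to the function $\Psi$ defined by the Legendre transform~\eqref{eq:Leg}. That is, I will show that this $\Psi$ satisfies hypotheses 1--4 of Theorem~\ref{thm:verif}; the theorem then delivers $\psi = \Psi$ on $[-L, c/r]$ together with the feedback form~\eqref{eq:pi} for $\pv$, and the explicit expressions \eqref{eq:psi_sol}--\eqref{eq:pi_sol} follow by inverting the Legendre duality on each of the two regions $[0, c/r]$ and $[-L, 0)$.

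First I would record the duality relations. Since Proposition~\ref{prop:hpsi_FBP} gives that $\hpsi$ is strictly concave on $[0, \yL]$, the map $y \mapsto \hpsi_y(y)$ is a strictly decreasing bijection of $[0, \yL]$ onto $[\hpsi_y(\yL), \hpsi_y(0)]$; using $\hpsi_y(0) = c/r$ and $\hpsi_y(\yo) = 0$ from~\eqref{eq:hatm1}, and $\hpsi_y(\yL) = -L$ from the free-boundary conditions, this range is exactly $[-L, c/r]$, the interval on which $\Psi$ is needed. Hence for each $w \in [-L, c/r]$ the maximum in~\eqref{eq:Leg} is attained at the unique $y = y(w) \in [0, \yL]$ with $\hpsi_y(y) = w$, and the envelope computation gives $\Psi_w(w) = -y(w)$, $\Psi_{ww}(w) = -1/\hpsi_{yy}(y(w))$, and $\Psi(w) = \hpsi(y(w)) - w\, y(w)$.

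Next I would verify the four hypotheses. Continuity of $\Psi$, the sign $\Psi_w = -y \le 0$, and $\Psi_{ww} = -1/\hpsi_{yy} > 0$ are immediate from the duality relations and strict concavity of $\hpsi$, so $\Psi$ is continuous, non-increasing, and convex; since $\Psi$ is non-increasing with $\Psi(c/r) = \hpsi(0) = 0$ and $\Psi(-L) = \hpsi(\yL) + L\yL = \rho/(\la + \rho) < 1$ (using the boundary conditions in~\eqref{eq:hpsi_FBP}), its range lies in $[0, 1]$, and hypotheses 2 and 3 hold. The kink of $\Psi$ sits at $w = 0$, which corresponds to $y = \yo$ because $\hpsi_y(\yo) = 0$; the regularity of $\hpsi$ from Proposition~\ref{prop:hpsi_FBP} ($\C^2$ off $\yo$, $\C^1$ with one-sided second derivatives at $\yo$) then transfers to $\Psi$ via the inverse function theorem, giving hypothesis 1. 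For hypothesis 4, I would substitute the minimizing $\pi = -\frac{\mu - r}{\sig^2}\,\Psi_w/\Psi_{ww}$ into~\eqref{eq:HJB} to reduce it to $\la\Psi + \rho(\Psi - 1){\bf 1}_{\{w < 0\}} = (rw - c)\Psi_w - \del\,\Psi_w^2/\Psi_{ww}$, then rewrite everything in the dual variable using $\Psi = \hpsi - y\hpsi_y$, $\Psi_w = -y$, $\Psi_{ww} = -1/\hpsi_{yy}$, $w = \hpsi_y$, and the equivalence $w < 0 \iff y > \yo$; after rearrangement this is precisely the linear ODE in~\eqref{eq:hpsi_FBP}, which $\hpsi$ solves by construction. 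With all hypotheses verified, Theorem~\ref{thm:verif} yields $\psi = \Psi$ and~\eqref{eq:pi}.

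Finally, to get the closed forms I would invert the duality on each region. On $0 \le w \le c/r$ (so $0 \le y \le \yo$), solving $\hpsi_y(y) = \frac{c}{r}\big[1 - (y/\yo)^{B_1 - 1}\big] = w$ gives $y = \yo(1 - rw/c)^{1/(B_1 - 1)}$; inserting this into $\Psi(w) = \hpsi(y) - wy$ via~\eqref{eq:hatm1} and simplifying with $q = B_1/(B_1 - 1)$ produces the second line of~\eqref{eq:psi_sol}, and inserting it into $\pv = -\frac{\mu - r}{\sig^2}\,y\,\hpsi_{yy}(y)$ via~\eqref{eq:hpsi_yy}, using $B_1 - 1 = 1/(q - 1)$, produces the second line of~\eqref{eq:pi_sol}. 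On $-L \le w < 0$ (so $\yo < y \le \yL$), the relation $\hpsi_y(y) = w$ is exactly~\eqref{eq:yw}, and inserting the second branch of~\eqref{eq:hpsi} into $\Psi(w) = \hpsi(y) - wy$ and the second branch of~\eqref{eq:hpsi_yy} into $\pv = -\frac{\mu - r}{\sig^2}\,y\,\hpsi_{yy}(y)$ gives the first lines of~\eqref{eq:psi_sol} and~\eqref{eq:pi_sol}. I expect the main obstacle to be the bookkeeping in hypothesis 4 --- in particular, tracking the indicator through the change of variables (it becomes ${\bf 1}_{\{y > \yo\}}$ since $\hpsi_y$ is decreasing and vanishes at $\yo$) and checking that the single kink at $w = 0$ is harmless, which is precisely why Theorem~\ref{thm:verif} was stated allowing one-sided second derivatives there; everything else is routine algebra.
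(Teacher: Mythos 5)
Your proposal is correct and follows essentially the same route as the paper: verify the four hypotheses of Theorem~\ref{thm:verif} for the Legendre dual $\Psi$ of $\hpsi$ (using $\Psi_w = -y$, $\Psi_{ww} = -1/\hpsi_{yy}$, and the correspondence $w<0 \iff y>\yo$ to map the dual ODE back to the HJB equation), then invert $\hpsi_y(y)=w$ on each branch to obtain the closed forms. Your explicit computation $y = \yo(1-rw/c)^{1/(B_1-1)}$ and the resulting algebra for the second lines of \eqref{eq:psi_sol} and \eqref{eq:pi_sol} are just a more detailed version of what the paper leaves implicit.
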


\begin{proof}
To prove this theorem, it is enough to show that $\Psi$ defined by \eqref{eq:Leg} satisfies the four conditions in Theorem \ref{thm:verif}.   First, note that the optimal value of $y$ in \eqref{eq:Leg} solves $\hpsi_y(y) = w$, and a unique such $y \in [\yo, \yL]$ exists because Proposition \ref{prop:hpsi_FBP} shows that $\hpsi_y$ is strictly decreasing.  Because of the simple form of $\hpsi$ for $y \in [0, \yo]$, we can solve for $\Psi$ explicitly for $w \in [0, c/r]$, as displayed in \eqref{eq:psi_sol}.  The relationship $\hpsi_y(y) = w$ gives us \eqref{eq:yw} for $-L \le w < 0$, and $\Psi(w) = \hpsi(y) - y \hpsi_y(y)$ gives us the first expression in \eqref{eq:psi_sol}.

Also, if we insert $\hpsi(y) = \Psi(w) - w \Psi_w(w)$, $\hpsi_y(y) = w$, and $\hpsi_{yy}(y) = -1/\Psi_{ww}(w)$ into the differential equation in \eqref{eq:hpsi_FBP}, we obtain the HJB equation in \eqref{eq:HJB}, modulo applying the first-order necessary condition for $\pi$.  Thus, $\Psi$ satisfies condition 4 of Theorem \ref{thm:verif}.

Note that $w = c/r$ corresponds to $y = 0$, and $w = -L$ corresponds to $y = \yL$.  Condition 3 of Theorem \ref{thm:verif} follows from $\Psi(c/r) = \hpsi(0) - 0 \cdot \hpsi_y(0) = 0$.  Similarly, condition 2 follows from $\Psi(-L) = \hpsi(\yL) - \yL \hpsi_y(\yL) = \rho/(\la + \rho)$.

Finally, $\Psi$ is decreasing and convex on $[-L, c/r]$ because $\hpsi$ is increasing and concave on $[0, \yL]$.  $\Psi$ is twice continuously differentiable, except at $w = 0$ where it is $\C^1$ and has left- and right-second derivatives because $\hpsi$ satisfies the same properties with the exception point equal to $y = \yo$, which corresponds to $w = 0$.  Thus, $\Psi$ satisfies condition 1 of Theorem \ref{thm:verif}.
\end{proof}

\section{Properties of the Minimum Probability of Lifetime Exponential Parisian Ruin and Optimal Investment Strategy}\label{sec:prop}

In this section, we address the following questions.

\begin{enumerate}
\item{} How does the optimal investment strategy $\pv$ given in \eqref{eq:pi_sol} compare with the optimal investment strategy when minimizing the probability of lifetime ruin, as computed in Young \cite{2.}?

\item{} How does $\pv$ compare with the optimal investment strategy $\pi_L$ when minimizing expected occupation time, as computed in Bayraktar and Young \cite{8.}?  More generally, because $\lim_{\rho \downarrow 0} \pv = \pi_L$, how does $\pv$ change as the hazard rate $\rho$ changes?

\item{} How does $\pv$ vary with respect to $w$?

\item{} How do $\pv$ and $\psi$ change as $L \to \infty$?
\end{enumerate}

Let $\pi_0$ denote the optimal investment strategy when minimizing the probability of lifetime ruin.  Recall that this is the optimal investment strategy corresponding to any ruin level; that is, it is independent of the ruin level.  Then, we have the following proposition.

\begin{prop}\label{prop:pi_pi0}
$\pv(w) = \pi_0(w)$ for $0 <  w < c/r$, and $\pv(w) > \pi_0(w)$ for $-L < w < 0$.
\end{prop}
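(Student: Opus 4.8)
The plan is to dispose of the first assertion by direct comparison of closed forms, and to prove the second by reducing $\pv(w) > \pi_0(w)$ to a one-variable inequality that is settled using Vieta's relations among $B_1,\dots,B_4$. For $0 < w < c/r$, the second line of \eqref{eq:pi_sol} reads $\pv(w) = \frac{\mu-r}{\sig^2}\,\frac{c/r-w}{q-1}$, which is precisely the optimal amount invested in the risky asset when minimizing the probability of lifetime ruin, as computed in Young \cite{2.}; the two expressions share the same constant $q$, since the quadratic defining $q$ in \eqref{eq:q} is the one governing the lifetime-ruin value function. Hence $\pv = \pi_0$ on $(0, c/r)$, with no further work.

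For $-L < w < 0$, I would parametrize by $u := y/\yL$, where $y = y(w) \in (\yo, \yL)$ solves \eqref{eq:yw}; as $w$ runs over $(-L, 0)$ the variable $u$ runs over $(\yoL, 1)$. Using $1/(q-1) = B_1 - 1$ and \eqref{eq:yw} to express $c/r - w$ in terms of $u$, one writes
\[
\pi_0(w) = \frac{\mu-r}{\sig^2}(B_1 - 1)\left(\frac{c}{r} + L\right)\left[\frac{B_3(1-B_4)}{B_3-B_4}u^{B_3-1} + \frac{B_4(B_3-1)}{B_3-B_4}u^{B_4-1}\right];
\]
subtracting this from the first line of \eqref{eq:pi_sol} and collecting the coefficients of $u^{B_3-1}$ and $u^{B_4-1}$ yields, after factoring out $u^{B_3-1}$,
\[
\pv(w) - \pi_0(w) = \frac{\mu-r}{\sig^2}\,\frac{1}{B_3-B_4}\left(\frac{c}{r} + L\right)u^{B_3-1}\,f(u), \qquad f(u) := B_3(1-B_4)(B_3-B_1) + (-B_4)(B_3-1)(B_1-B_4)\,u^{B_4-B_3}.
\]
Since $B_3 - B_4 > 0$ and $u^{B_3-1} > 0$, it remains to show $f > 0$ on $(\yoL, 1)$.

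The coefficient $(-B_4)(B_3-1)(B_1-B_4)$ is positive (because $B_4 < 0 < B_3 - 1$ and $B_1 > B_4$) and the exponent $B_4 - B_3$ is negative, so $f$ is strictly decreasing on $(0,1]$; hence $f(u) > f(1)$ for $u \in (\yoL, 1)$. A short expansion gives $f(1) = (B_3-B_4)\big[(B_3+B_4) - B_3 B_4 - B_1\big]$, and Vieta's formulas for the quadratics in \eqref{eq:B1}--\eqref{eq:B2} and \eqref{eq:B3}--\eqref{eq:B4} give $(B_3+B_4) - B_3 B_4 = \frac{r+\del}{\del} = (B_1+B_2) - B_1 B_2$, whence $(B_3+B_4) - B_3 B_4 - B_1 = (B_1+B_2) - B_1 B_2 - B_1 = B_2(1-B_1) > 0$ since $B_2 < 0$ and $B_1 > 1$. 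Therefore $f(1) > 0$, so $f > 0$ on $(\yoL, 1)$ and $\pv(w) > \pi_0(w)$ there. The only nontrivial steps are the two algebraic reductions — putting $\pv - \pi_0$ into the factored form above via \eqref{eq:yw}, and recognizing that $f(1)$ collapses to $(B_3-B_4)B_2(1-B_1)$ once the Vieta relations are invoked; the rest is sign checking with $B_1 > B_3 > 1 > 0 > B_2 > B_4$ and $q - 1 = 1/(B_1-1)$.
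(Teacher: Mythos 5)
Your proposal is correct and follows essentially the same route as the paper: both reduce $\pv > \pi_0$ on $]-L,0[$ to a one-variable inequality in $u = y/\yL$ via \eqref{eq:yw}, use monotonicity in $u$ to reduce to the endpoint $u=1$ (i.e.\ $y=\yL$), and then check the sign of $-B_3(1-B_4)(B_1-B_3)-B_4(B_3-1)(B_1-B_4)$. The only difference is cosmetic: the paper states that this endpoint inequality is equivalent to $r(q-1) > \del$ and calls it straightforward, whereas you verify it explicitly via the Vieta relations $(B_3+B_4)-B_3B_4 = (r+\del)/\del = (B_1+B_2)-B_1B_2$, which amounts to the same computation.
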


\begin{proof}
For $0 < w < c/r$, the second part of \eqref{eq:pi_sol} shows us that $\pv(w) = \frac{\mu - r}{\sig^2} \, \frac{c/r - w}{q - 1}$, which equals $\pi_0(w)$; see Young \cite{2.}.

To prove the second part of this proposition, for $-L < w < 0$, the first part of \eqref{eq:pi_sol} implies that $\pv(w) > \pi_0(w)$ if and only if
\[
\left( \dfrac{c}{r} + L \right) \dfrac{(B_3 - 1)(1 - B_4)}{B_3 - B_4} \left[ B_3 \left( \dfrac{y}{\yL} \right)^{B_3 - 1}  - B_4 \left( \dfrac{y}{\yL} \right)^{B_4 - 1} \right] > \left( \dfrac{c}{r} - w \right) (B_1 - 1),
\]
in which $w$ and $y$ are related by \eqref{eq:yw}.  If we substitute for $w$ in the above inequality in terms of $y$ and simplify the result, we obtain the following equivalent inequality:
\[
- B_3 (1 - B_4)(B_1 - B_3) \left( \dfrac{y}{\yL} \right)^{B_3 - 1} - B_4 (B_3 - 1)(B_1 - B_4) \left( \dfrac{y}{\yL} \right)^{B_4 - 1} > 0.
\]
Because the left-hand side of this inequality decreases with respect to $y$, it is enough to show it at $y = \yL$, that is,
\[
- B_3 (1 - B_4)(B_1 - B_3) - B_4 (B_3 - 1)(B_1 - B_4) > 0,
\]
or equivalently,
\[
r(q - 1) > \del,
\]
which is straightforward to demonstrate. Thus, $\pv(w) > \pi_0(w)$ for $-L < w < 0$.
\end{proof}

Proposition \ref{prop:pi_pi0} implies that, if one seeks to minimize the probability of lifetime exponential Parisian ruin, then leveraging is {\it worse} for negative wealth than when minimizing the probability of lifetime ruin.  In Section \ref{sec:ext_1}, we compute the minimum probability of lifetime exponential Parisian ruin when $L = \infty$ and when we restrict admissible investment strategies so that the amount invested in the risky asset lies in the interval $[0, \pi_0(w)]$ when wealth equals $w \in \,]-\infty, c/r[\, $.

When minimizing expected occupation time of the individual's wealth process below zero, Bayraktar and Young \cite{8.} found a similar result.  In fact, because $B_3 \big |_{\rho = 0} = B_1$ and $B_4 \big|_{\rho = 0} = B_2$, the limit of the optimal investment strategy $\pv$ as $\rho \downarrow 0$ in \eqref{eq:pi_sol} is {\it identical} to the investment strategy to minimize expected occupation time.\footnote{Note that when $\rho = 0$, the probability of lifetime exponential Parisian ruin equals $0$ for {\it any} investment strategy, from which it follows that it does not make sense to talk about the optimal investment strategy when $\rho = 0$.  However, $\lim_{\rho \downarrow 0} \pv$ exists, and it's that limit that equals the optimal investment strategy to minimize expected occupation time.  In Section \ref{sec:ext_2}, we further demonstrate the relationship between $\psi$ and the minimum expected occupation time.}  That observation leads to the question of how $\pv$ changes with respect to $\rho$, and we answer that question in the following proposition.

\begin{prop}\label{prop:pi_rho}
Suppose $0 < \rho_1 < \rho_2$, and let $\pi_i$ denote the optimal investment strategy $\pv$ when $\rho = \rho_i$ for $i = 1, 2$. Then, $\pi_1(w) = \pi_2(w)$ for $0 < w < c/r$, and $\pi_1(w) < \pi_2(w)$ for $-L < w < 0$.
\end{prop}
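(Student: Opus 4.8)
The plan is to prove the two parts separately, with the first part being essentially trivial and the second part requiring a monotonicity argument in $\rho$. For $0 < w < c/r$, the second line of \eqref{eq:pi_sol} shows that $\pv(w) = \frac{\mu - r}{\sig^2} \, \frac{c/r - w}{q - 1}$, and since $q$ depends only on $r$, $\la$, and $\del$ (see \eqref{eq:q}) and not on $\rho$, we immediately get $\pi_1(w) = \pi_2(w)$ there. So the entire content is the claim that $\pv(w)$ is strictly increasing in $\rho$ on $-L < w < 0$.

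For the negative-wealth region, I would work with the first line of \eqref{eq:pi_sol} together with the parametrization \eqref{eq:yw} of $w$ in terms of $y \in \,]\yo, \yL]$. The cleanest route is probably not to differentiate in $\rho$ directly (the dependence of $\yo$ and $\yL$ on $\rho$ through the transcendental equation $g(\yoL) = 0$ is messy), but rather to go back to the stochastic control formulation. Recall that $\psi$ is the value function of a minimization problem, and that a larger $\rho$ corresponds to a ``faster'' excursion clock. I would argue that $\psi$ itself is non-decreasing in $\rho$: for fixed admissible $\pi$, the payoff ${\bf 1}_{\{\kap^\pi < \tau_d\}} {\bf 1}_{\{Z^\pi_{\kap^\pi \wedge \tau_d} > -L\}} + \frac{\rho}{\la + \rho} {\bf 1}_{\{Z^\pi_{\kap^\pi \wedge \tau_d} \le -L\}}$ is, in distribution, larger when $\rho$ is larger (the excursion-triggering clock ${\bf e}_\rho$ is stochastically smaller, so exponential Parisian ruin is more likely, and the terminal value $\frac{\rho}{\la + \rho}$ on $\{Z \le -L\}$ is also increasing in $\rho$); taking the infimum over $\pi$ preserves the inequality, so $\psi$ is non-decreasing in $\rho$. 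However, this alone does not give the claim about $\pv$; for that I would instead give a direct analytic proof using the explicit solution.

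Concretely, I would reduce the inequality $\pi_1(w) < \pi_2(w)$, after expressing $w$ via \eqref{eq:yw} and clearing common positive factors, to a comparison between two expressions built from $B_3, B_4$ (at $\rho = \rho_1$) and $B_3', B_4'$ (at $\rho = \rho_2$). The key monotonicity facts are: $B_3$ is increasing in $\rho$ with $B_1 > B_3 > 1$, and $B_4$ is decreasing in $\rho$ with $0 > B_2 > B_4$ (both read off from \eqref{eq:B3}--\eqref{eq:B4} since the discriminant and the linear term both move monotonically in $\rho$). Also the ratio $\yoL$ and the boundary $\yL$ shift with $\rho$, and one can track their monotonicity from $g$ in \eqref{eq:g}: larger $B_3$ and more negative $B_4$ change $g$ in a controlled way, pinning down how $\yoL$ moves, and then \eqref{eq:yo_1} or \eqref{eq:yo_2} gives the corresponding movement of $\yo$ and $\yL$. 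Substituting all of this into the reduced inequality, I expect it to follow from the elementary ordering of the exponents together with the fact (already used in the proof of Proposition \ref{prop:pi_pi0}) that $r(q-1) > \del$, applied now at both $\rho_1$ and $\rho_2$.

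The main obstacle will be bookkeeping: the optimal strategy $\pv$ depends on $\rho$ not only through the exponents $B_3, B_4$ but also through the free boundaries $\yo, \yL$, which are only defined implicitly via $g(\yoL) = 0$. So the delicate step is establishing the monotonicity of $\yoL$ (equivalently of $\yo$ and $\yL$) in $\rho$ and confirming that this monotonicity pushes $\pv$ in the right direction rather than cancelling the gain from the exponents. I would handle this by an implicit-function / sign-of-derivative argument on $g$: differentiate the defining relation $g(\yoL; \rho) = 0$ in $\rho$, using the known signs $\partial B_3/\partial\rho > 0$, $\partial B_4/\partial\rho < 0$ and $g_z > 0$ from \eqref{eq:gz}, to get the sign of $d\yoL/d\rho$, and similarly differentiate \eqref{eq:yo_1} to get the sign of $d\yo/d\rho$. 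Once those signs are in hand, the remaining inequality is a routine (if tedious) estimate of the type already carried out for Proposition \ref{prop:pi_pi0}, and I would present it in that same compressed style, reducing it to a single scalar inequality in $r, \la, \del, \rho_1, \rho_2$ that can be verified directly.
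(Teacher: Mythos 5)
Your first claim ($\pi_1=\pi_2$ on $]0,c/r[$ because $q$ is $\rho$-free) is exactly the paper's argument. For the negative-wealth region, however, what you have written is a plan rather than a proof, and the plan rests on a false premise. You assert that $B_3$ is increasing in $\rho$; in fact it is strictly decreasing (differentiate \eqref{eq:B3}: with $x=r-\la-\rho+\del$ one gets $\partial B_3/\partial\rho=\frac{1}{2\del}\bigl[-1+\frac{2\del-x}{\sqrt{x^2+4\del(\la+\rho)}}\bigr]<0$ since $(2\del-x)^2<x^2+4\del(\la+\rho)$ is equivalent to $r>0$), which is also what the paper's own inequality $B_1>B_3>1$ encodes, given $B_3|_{\rho=0}=B_1$. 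Beyond that, you correctly identify the hard step --- tracking how $\yoL$, $\yo$, $\yL$ and the parametrization point $z=y/\yL$ corresponding to a \emph{fixed} $w$ all move with $\rho$, and showing the net effect on $\pv(w)$ has the right sign --- but you leave it unresolved ("I expect it to follow", "a routine if tedious estimate"). That is precisely the part that needs to be done, and with the corrected sign of $\partial B_3/\partial\rho$ it is not at all clear that the "gain from the exponents" you are counting on is even in the claimed direction. So there is a genuine gap.

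The paper avoids all of this bookkeeping. It derives from the HJB equation a first-order ODE for $\pv$ itself on $]-L,0[$, namely $\pv_w=\frac{2}{\mu-r}(\la+\rho+\del-r)-\frac{2}{\sig^2}\frac{c-rw}{\pv}$, in which $\rho$ enters only through an additive constant, together with the initial condition $\pv(-L+)=\frac{2r}{\mu-r}\bigl(\frac{c}{r}+L\bigr)$, which is independent of $\rho$ (this follows from $(B_3-1)(1-B_4)=r/\del$). Since $\pi_1$ and $\pi_2$ start at the same value at $w=-L$ and the defect ${\rm P_1}\pi_2=\frac{2}{\mu-r}(\rho_2-\rho_1)>0={\rm P_1}\pi_1$, a standard comparison theorem for first-order ODEs (Walter, Chapter 8) gives $\pi_1<\pi_2$ on $]-L,0[$ immediately. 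If you want to salvage a direct approach from the explicit formulas, you would at minimum need to (i) fix the monotonicity of $B_3$, (ii) actually carry out the implicit differentiation of $g(\yoL;\rho)=0$ to sign $d\yoL/d\rho$, and (iii) control the $\rho$-dependence of the map $w\mapsto z$ in \eqref{eq:yw}; I would instead recommend the ODE-comparison route, which collapses the entire $\rho$-dependence into one constant.
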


\begin{proof}
Because $q$ in \eqref{eq:q} is independent of $\rho$, the second expression in \eqref{eq:pi_sol} shows us that $\pv(w)$ is independent of $\rho$ for $0 < w < c/r$.

For $-L < w < 0$, we use a comparison argument to prove that $\pi_1(w) < \pi_2(w)$.  To that end, begin with the ODE for $\psi$ when $-L < w < 0$, namely,
\[
(\la + \rho)\psi - \rho = (rw - c)\psi_w - \del \, \dfrac{\psi_w^2}{\psi_{ww}}.
\]
Because $\pv(w) = - \, \frac{\mu - r}{\sig^2} \, \frac{\psi_w}{\psi_{ww}}$, we can rewrite this ODE as follows:
\[
(\la + \rho)\psi - \rho = (rw - c)\psi_w + \dfrac{\mu - r}{2} \, \pv \psi_w.
\]
Differentiate this ODE with respect to $w$ to obtain
\[
(\la + \rho)\psi_w  = r \psi_w + (rw - c)\psi_{ww} + \dfrac{\mu - r}{2} \, \pv_w \psi_w + \dfrac{\mu - r}{2} \, \pv \psi_{ww}.
\]
Divide by $\psi_w$ and again use $\pv(w) = - \, \frac{\mu - r}{\sig^2} \, \frac{\psi_w}{\psi_{ww}}$ to obtain the following first-order ODE for $\pv$ on $]-L, 0[\,$:
\begin{equation}\label{eq:pi_ode}
\pv_w = \dfrac{2}{\mu - r} \, (\la + \rho + \del - r) - \dfrac{2}{\sig^2} \, \dfrac{c - rw}{\pv} \, .
\end{equation}
Also, from Theorem \ref{thm:psi}, we have the initial condition
\begin{equation}\label{eq:pi_L}
\pv(-L+) = \dfrac{2r}{\mu - r} \left( \dfrac{c}{r} + L \right),
\end{equation}
which is independent of $\rho$.  Thus, we have $\pi_1(-L+) = \pi_2(-L+)$, and \eqref{eq:pi_ode} implies that \\
$$(\pi_1)_w(-L+) < (\pi_2)_w(-L+).$$
Define the {\it defect} ${\rm P_1}$ corresponding to $\pi_1$ on $]-L, 0[$ as in Chapter 8 of Walter \cite{18.} by
\[
{\rm P_1} f = f_w - \dfrac{2}{\mu - r} \, (\la + \rho_1 + \del - r) + \dfrac{2}{\sig^2} \, \dfrac{c - rw}{f}.
\]
Then, we have ${\rm P_1} \pi_1 = 0$ and
\begin{align*}
{\rm P_1} \pi_2 &= (\pi_2)_w - \dfrac{2}{\mu - r} \, (\la + \rho_1 + \del - r) + \dfrac{2}{\sig^2} \, \dfrac{c - rw}{\pi_2} \\
&= \left[ \dfrac{2}{\mu - r} \, (\la + \rho_2 + \del - r) - \dfrac{2}{\sig^2} \, \dfrac{c - rw}{\pi_2} \right] - \dfrac{2}{\mu - r} \, (\la + \rho_1 + \del - r) + \dfrac{2}{\sig^2} \, \dfrac{c - rw}{\pi_2} \\
&= \dfrac{2}{\mu - r} \, (\rho_2 - \rho_1) > 0.
\end{align*}
From Chapter 8 of \cite{18.}, the three conditions $\pi_1(-L+) = \pi_2(-L+)$, $(\pi_1)_w(-L+) < (\pi_2)_w(-L+)$, and ${\rm P_1} \pi_1 < {\rm P_1} \pi_2$ imply that $\pi_1(w) < \pi_2(w)$ for $-L < w < 0$.
\end{proof}

Proposition \ref{prop:pi_rho} implies that, as the exponential excursion clock's hazard rate increases, the individual invests more in the risky asset in order to get her wealth above $0$, at which point the exponential excursion clock ``turns off.''  Also, as a corollary of Proposition \ref{prop:pi_rho}, we deduce that, because $\rho > 0$, $\pv$ on $]-L, 0[$ is greater than the optimal investment strategy to minimize expected occupation time.

We also have another corollary of Proposition \ref{prop:pi_rho} that examines how $\pv$ changes with $\la$.

\begin{cor}\label{cor:pi_la}
As $\la$ increases, $\pv$ decreases on $]0, c/r[$ and increases on $]-L, 0[\, $.
\end{cor}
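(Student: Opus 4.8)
The plan is to split the argument into the two wealth regimes, exactly as in the proofs of Propositions~\ref{prop:pi_pi0} and~\ref{prop:pi_rho}, exploiting the fact that in the feedback ODE~\eqref{eq:pi_ode} the parameters $\la$ and $\rho$ enter only through the sum $\la + \rho$, and that the initial condition~\eqref{eq:pi_L} involves neither of them.

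For $0 < w < c/r$, the second line of~\eqref{eq:pi_sol} gives $\pv(w) = \frac{\mu - r}{\sig^2} \, \frac{c/r - w}{q - 1}$, which depends on $\la$ only through $q$ in~\eqref{eq:q}. So the first step is to show that $q$ is strictly increasing in $\la$. Writing $2rq = (r + \la + \del) + \sqrt{(r + \la + \del)^2 - 4r\la}$ and differentiating in $\la$, I expect $\partial q/\partial \la > 0$ to reduce to the inequality $\sqrt{(r + \la + \del)^2 - 4r\la} > r - \la - \del$: this is immediate when $r - \la - \del \le 0$, and when the right side is positive, squaring collapses it to $4r\del > 0$, using the difference-of-squares identity $(r+\la+\del)^2 - (r-\la-\del)^2 = 4r(\la+\del)$. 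Hence $q - 1 > 0$ increases in $\la$, and $\pv(w)$ decreases in $\la$ on $]0, c/r[$.

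For $-L < w < 0$, I would transcribe the comparison argument in the proof of Proposition~\ref{prop:pi_rho} with $\la$ in place of $\rho$. Fix $0 < \la_1 < \la_2$ and let $\pi_i$ be the corresponding optimal feedback strategy $\pv$; by~\eqref{eq:pi_ode}, each $\pi_i$ solves $(\pi_i)_w = \frac{2}{\mu - r}(\la_i + \rho + \del - r) - \frac{2}{\sig^2} \, \frac{c - rw}{\pi_i}$ on $]-L, 0[$, with the common boundary value $\pi_i(-L+) = \frac{2r}{\mu - r}(c/r + L)$ from~\eqref{eq:pi_L}, which is independent of $\la$. Defining the defect $\mathrm{P}_1 f = f_w - \frac{2}{\mu - r}(\la_1 + \rho + \del - r) + \frac{2}{\sig^2} \, \frac{c - rw}{f}$ as in Chapter~8 of Walter~\cite{18.}, one gets $\mathrm{P}_1 \pi_1 = 0$, $\mathrm{P}_1 \pi_2 = \frac{2}{\mu - r}(\la_2 - \la_1) > 0$, and $(\pi_1)_w(-L+) < (\pi_2)_w(-L+)$ from~\eqref{eq:pi_ode}; the comparison theorem then yields $\pi_1(w) < \pi_2(w)$ on $]-L, 0[$, i.e.\ $\pv$ increases in $\la$ there.

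There is no substantive obstacle: the negative-wealth regime is an immediate transcription of Proposition~\ref{prop:pi_rho}, since $\la$ and $\rho$ play identical roles in~\eqref{eq:pi_ode} and neither affects~\eqref{eq:pi_L}; the only genuinely new computation is the elementary monotonicity of $q$ in $\la$ outlined above. The one point to note is that the comparison argument needs $\pv > 0$ on $]-L, 0[$ so that division by $\pi_i$ is legitimate, but this positivity is guaranteed by the first line of~\eqref{eq:pi_sol} because $B_3 > 1 > 0 > B_4$ makes the bracketed factor positive there.
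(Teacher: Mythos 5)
Your proof is correct and follows essentially the same route as the paper: for $w\in\,]-L,0[$ the paper likewise just transcribes the comparison argument of Proposition~\ref{prop:pi_rho} with $\la$ and $\rho$ interchanged, which works precisely because they enter \eqref{eq:pi_ode} only through the sum $\la+\rho$ and \eqref{eq:pi_L} through neither. The only difference is that on $]0,c/r[$ the paper simply cites Young~\cite{2.} for the monotonicity of $\pi_0$ in $\la$, whereas you verify directly that $q$ in \eqref{eq:q} increases in $\la$; that computation is correct and makes the corollary self-contained.
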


\begin{proof}
From Young \cite{2.}, we know that $\pi_0$ decreases with $\la$; thus, on $]0, c/r[ \,$, $\pv$ also decreases with $\la$.  From the proof of Proposition \ref{prop:pi_rho}, we apply the same argument with $\rho$ and $\la$ interchanged to deduce that $\pv$ on $]-L, 0[$ increases with $\la$.
\end{proof}

When minimizing the probability of lifetime ruin, the optimal investment strategy is a linear, decreasing function of wealth.  From Theorem \ref{thm:psi}, we know that the same is true for $\pv$ on $]0, c/r[\,$.  Therefore, we wish to see how $\pv$ varies with wealth on $]-L, 0[\, $.

\begin{prop}\label{prop:pi_cx}
On $]-L, 0[\,$, $\pv$ is a convex function of wealth.  Moreover, $\pv$ decreases with respect to wealth on $]-L, 0[$ if and only if
\begin{equation}\label{eq:pi_decr}
\left( \dfrac{c}{r} + L \right) \left( - \, \dfrac{B_4}{B_3 - 1} \right)^{\frac{B_3 - 1}{B_3 - B_4}} \left( \dfrac{B_3}{1 - B_4} \right)^{\frac{1 - B_4}{B_3 - B_4}} \, \dfrac{\la + \rho + \del - r}{\del} \le \dfrac{c}{r} \, ,
\end{equation}
which is automatic if $r \ge \la + \rho + \del$.  Finally, $\pv$ increases with respect to wealth on $]-L, 0[$ if and only if $r \le \la + \rho$.
\end{prop}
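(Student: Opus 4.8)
The plan is to pass to the dual variable and set $z := y/\yL$, which decreases from $1$ to $\yoL$ as $w$ increases from $-L$ to $0$ through the relation \eqref{eq:yw}; then to reduce all three claims to the sign behavior of one elementary function. Write
\[
P(z) = B_3 z^{B_3-1} - B_4 z^{B_4-1}, \qquad Q(z) = B_3 z^{B_3-2} - B_4 z^{B_4-2},
\]
so that $P = zQ$ and, because $B_3 > 1 > 0 > B_4$, both $P$ and $Q$ are strictly positive on $]0,\infty[$. By Theorem~\ref{thm:psi}, $\pv = \frac{\mu-r}{\sig^2}\,\frac{(c/r+L)(B_3-1)(1-B_4)}{B_3-B_4}\,P(z)$ on $]-L,0[$, while differentiating \eqref{eq:yw} gives $\frac{dw}{dz} = -\,\frac{(c/r+L)(B_3-1)(1-B_4)}{B_3-B_4}\,Q(z) < 0$. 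Hence, along the free-boundary curve,
\[
\pv_w = \frac{d\pv/dz}{dw/dz} = -\,\frac{\mu-r}{\sig^2}\,\frac{P'(z)}{Q(z)}, \qquad P'(z) = B_3(B_3-1)z^{B_3-2} - B_4(B_4-1)z^{B_4-2},
\]
so the sign of $\pv_w$ is the opposite of that of $P'(z)$, and everything comes down to understanding $P'$ on $]\yoL,1[$.

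For the convexity claim I would differentiate once more: $\pv_{ww} = -\frac{\mu-r}{\sig^2}\,\frac{dz}{dw}\,\frac{d}{dz}\!\left(\frac{P'}{Q}\right)$, with $\frac{d}{dz}(P'/Q) = (P''Q - P'Q')/Q^2$. The key is that expanding $P''Q - P'Q'$ produces the clean cancellation
\[
P''Q - P'Q' = -\,B_3 B_4 (B_3 - B_4)^2\, z^{B_3 + B_4 - 5} > 0,
\]
the positivity coming from $B_3 B_4 < 0$; since $dz/dw < 0$, this yields $\pv_{ww} > 0$ on $]-L,0[$. I expect this identity, and tracking signs through it, to be the one genuinely delicate step; everything that follows is bookkeeping.

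Next, since $Q > 0$, the function $\pv$ is non-increasing on $]-L,0[$ iff $P' \ge 0$ on $]\yoL,1[$, and non-decreasing iff $P' \le 0$ there. Factoring $P'(z) = z^{B_4-2}\big(B_3(B_3-1)z^{B_3-B_4} - B_4(B_4-1)\big)$ and using $B_4(B_4-1) = (-B_4)(1-B_4) > 0$, the bracket is strictly increasing in $z$, so $P'$ has a unique zero $z^* \in \,]0,\infty[$, given by $(z^*)^{B_3-B_4} = \frac{(-B_4)(1-B_4)}{B_3(B_3-1)}$, with $P' < 0$ on $]0,z^*[$ and $P' > 0$ on $]z^*,\infty[$. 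Consequently $\pv$ decreases on $]-L,0[$ iff $z^* \le \yoL$, and increases on $]-L,0[$ iff $z^* \ge 1$. The increasing case is quick: $z^* \ge 1 \iff (-B_4)(1-B_4) \ge B_3(B_3-1) \iff B_4^2 - B_3^2 \ge B_4 - B_3 \iff B_3 + B_4 \le 1$ (dividing by $B_4 - B_3 < 0$); and since $B_3,B_4$ are the two roots of $\del B^2 - (r-\la-\rho+\del)B - (\la+\rho)$, their sum is $\frac{r-\la-\rho+\del}{\del}$, which is $\le 1$ precisely when $r \le \la + \rho$.

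For the decreasing case, recall from Proposition~\ref{prop:hpsi_FBP} that $g$ is strictly increasing with $g(\yoL) = 0$, so $z^* \le \yoL$ is equivalent to $g(z^*) \le 0$, and the point is that $g(z^*)$ has a closed form. Substituting $(z^*)^{B_3-1} = \frac{B_4(B_4-1)}{B_3(B_3-1)}(z^*)^{B_4-1}$ into \eqref{eq:g}, the two-power bracket collapses via the identity $(B_3-1)^2 - (1-B_4)^2 = (B_3 + B_4 - 2)(B_3 - B_4)$ to
\[
g(z^*) = \left(\frac{c}{r}+L\right)(z^*)^{B_4-1}\,\frac{(-B_4)(\la+\rho+\del-r)}{\del\,(B_3-1)} - \frac{c}{r},
\]
where I have used $B_3 + B_4 - 2 = \frac{r-\la-\rho-\del}{\del}$. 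Inserting $(z^*)^{B_4-1} = \left(\frac{B_3(B_3-1)}{(-B_4)(1-B_4)}\right)^{(1-B_4)/(B_3-B_4)}$ and regrouping the powers (note $\frac{B_3-1}{B_3-B_4} + \frac{1-B_4}{B_3-B_4} = 1$), the inequality $g(z^*) \le 0$ turns into exactly \eqref{eq:pi_decr}. Finally, if $r \ge \la + \rho + \del$ the factor $\frac{\la+\rho+\del-r}{\del}$ is non-positive, so the left-hand side of \eqref{eq:pi_decr} is $\le 0 < c/r$ and the inequality holds automatically.
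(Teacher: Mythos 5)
Your proof is correct, and its core is the same as the paper's: the convexity claim rests on exactly the cancellation $P''Q - P'Q' = -B_3B_4(B_3-B_4)^2 z^{B_3+B_4-5} > 0$ that the paper computes (there written as the derivative of the ratio $\big(B_3(B_3-1)z^{B_3-2}+B_4(1-B_4)z^{B_4-2}\big)/\big(B_3z^{B_3-2}-B_4z^{B_4-2}\big)$, which is your $P'/Q$ since $B_4(1-B_4)=-B_4(B_4-1)$). Where you differ is in how the two monotonicity thresholds are obtained: the paper first derives the first-order ODE \eqref{eq:pi_ode} for $\pv$ (in the proof of Proposition \ref{prop:pi_rho}), then uses convexity to reduce "decreasing'' to $\pv_w(0-)\le 0$ and "increasing'' to $\pv_w(-L+)\ge 0$, and evaluates the ODE at those endpoints to land on the condition $\yoL \ge \big(-B_4(1-B_4)/(B_3(B_3-1))\big)^{1/(B_3-B_4)}$ and on $r\le\la+\rho$. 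You instead stay entirely in the dual variable, locate the unique zero $z^*$ of $P'$, and compare it with $\yoL$ and with $1$; this reaches the same two conditions (your $z^*$ is precisely the right-hand side of the paper's \eqref{eq:pi_decr_yoL}) without needing the ODE, and it makes explicit the algebra converting $g(z^*)\le 0$ into \eqref{eq:pi_decr}, a step the paper only asserts. Both routes are sound; yours is slightly more self-contained, the paper's reuses machinery already built for Proposition \ref{prop:pi_rho}.
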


\begin{proof}
For $-L < w < 0$, from \eqref{eq:pi_sol}, we know that
\[
\pv(w) \propto B_3 z^{B_3 - 1} - B_4 z^{B_4 - 1},
\]
in which $z \in ]\yoL, 1[$ uniquely solves
\[
\dfrac{c}{r} - w = \left( \dfrac{c}{r} + L \right) \left[ \dfrac{B_3(1 - B_4)}{B_3 - B_4} \, z^{B_3 - 1} + \dfrac{B_4(B_3 - 1)}{B_3 - B_4} \, z^{B_4 - 1} \right].
\]
By differentiating $z$'s equation with respect to $w$, we obtain
\[
-1 = \left( \dfrac{c}{r} + L \right) \dfrac{(B_3 - 1)(1 - B_4)}{B_3 - B_4} \big( B_3 z^{B_3 - 2} - B_4 z^{B_4 - 2} \big) \dfrac{\dd z}{\dd w},
\]
from which we note that $z$ decreases with respect to $w$.  Thus, when we differentiate $\pv$ with respect to $w$, we get
\begin{align*}
\dfrac{\dd}{\dd w} \pv(w) &\propto \big( B_3(B_3 - 1) z^{B_3 - 2} + B_4(1 - B_4) z^{B_4 - 2} \big) \dfrac{\dd z}{\dd w} \\
&\propto - \, \dfrac{B_3(B_3 - 1) z^{B_3 - 2} + B_4(1 - B_4) z^{B_4 - 2}}{B_3 z^{B_3 - 2} - B_4 z^{B_4 - 2}} \, ,
\end{align*}
and, because $\dd z/\dd w < 0$,
\begin{align*}
\dfrac{\dd^2}{\dd w^2} \pv(w) &\propto \dfrac{\dd}{\dd z} \dfrac{B_3(B_3 - 1) z^{B_3 - 2} + B_4(1 - B_4) z^{B_4 - 2}}{B_3 z^{B_3 - 2} - B_4 z^{B_4 - 2}} \\
&\propto \big( B_3 z^{B_3 - 2} - B_4 z^{B_4 - 2} \big) \big( B_3(B_3 - 1)(B_3 - 2) z^{B_3 - 3} - B_4(1 - B_4)(2 - B_4) z^{B_4 - 3} \big) \\
&\quad - \big( B_3(B_3 - 1) z^{B_3 - 2} + B_4(1 - B_4) z^{B_4 - 2} \big)  \big( B_3(B_3 - 2) z^{B_3 - 3} + B_4(2 - B_4) z^{B_4 - 3} \big) \\
&=  -B_3 B_4 (B_3 - B_4)^2 \, z^{B_3 + B_4 - 5} > 0.
\end{align*}
Thus, $\pv$ is convex, that is, $\pv_w(w)$ increases with respect to $w$ for $-L < w < 0$.

It follows that, if $\pv_w(0-) \le 0$, then $\pv_w(w) < 0$ for all $-L < w < 0$.  From \eqref{eq:pi_ode}, we deduce that $\pv_w(0-) \le 0$ if and only if
\[
\la + \rho + \del - r \le \dfrac{\mu - r}{\sig^2} \, \dfrac{c}{\pv(0-)},
\]
or equivalently, by using the expression for $\pv$ from \eqref{eq:pi_sol} and simplifying the result,
\begin{equation}\label{eq:pi_decr_yoL}
\yoL \ge \left( - \, \dfrac{B_4(1 - B_4)}{B_3(B_3 - 1)} \right)^{\frac{1}{B_3 - B_4}}.
\end{equation}
Because $g$ in \eqref{eq:g} is identically zero for $z = \yoL$ and is increasing with respect to $z$, inequality \eqref{eq:pi_decr_yoL} holds if and only if $g$ evaluated at the right-hand side of \eqref{eq:pi_decr} is non-positive, which is equivalent to \eqref{eq:pi_decr}.

Also, because $\pv$ is convex, it follows that, if $\pv_w(-L+) \ge 0$, then $\pv_w(w) > 0$ for all $-L < w < 0$.  From \eqref{eq:pi_ode} and \eqref{eq:pi_L}, we deduce that $\pv_w(-L+) \ge 0$ if and only if $r \le \la + \rho$.
\end{proof}

In the next corollary, we consider the case for which $\la + \rho < r < \la + \rho + \del$.

\begin{cor}\label{cor:pi_mono}
Suppose $\la + \rho < r < \la + \rho + \del;$ then, there exists $L_0 > 0$ such that if $0 < L \le L_0$, then $\pv$ decreases with respect to wealth on $]-L, 0[\,$.  Furthermore, if $L > L_0$, then $\pv$ first decreases then increases with respect to wealth on $]-L, 0[\,$.
\end{cor}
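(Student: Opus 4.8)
The plan is to combine the structural facts already proved in Proposition~\ref{prop:pi_cx} with the key observation that, although the free boundaries $\yo,\yL$ (and $D_3,D_4$) depend on $L$, the exponents $B_3$ and $B_4$ do \emph{not}; this turns the whole statement into a monotonicity‑in‑$L$ question about the inequality \eqref{eq:pi_decr}. First I would note that, since $\la+\rho<r$, the ``increasing'' criterion $r\le\la+\rho$ of Proposition~\ref{prop:pi_cx} fails, so $\pv_w(-L+)<0$; and since $\pv$ is strictly convex on $]-L,0[$, its derivative $\pv_w$ is strictly increasing there. Hence exactly one of two things happens: either $\pv_w(0-)\le0$, in which case $\pv_w<0$ throughout and $\pv$ decreases on $]-L,0[$; or $\pv_w(0-)>0$, in which case $\pv_w$ is negative near $-L$ and positive near $0$, so it vanishes at a single interior point of $]-L,0[$, and $\pv$ first decreases and then increases (both pieces nonempty because the two sign conditions are strict). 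By Proposition~\ref{prop:pi_cx}, the first alternative is precisely the case in which \eqref{eq:pi_decr} holds, so the corollary reduces to showing that $\{\,L>0:\eqref{eq:pi_decr}\text{ holds}\,\}$ is a nonempty interval of the form $]0,L_0]$.

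For that reduced statement I would write the left‑hand side of \eqref{eq:pi_decr} as $\bigl(\tfrac{c}{r}+L\bigr)K$, where
\[
K:=\left(-\,\dfrac{B_4}{B_3-1}\right)^{\frac{B_3-1}{B_3-B_4}}\left(\dfrac{B_3}{1-B_4}\right)^{\frac{1-B_4}{B_3-B_4}}\cdot\dfrac{\la+\rho+\del-r}{\del}.
\]
Under the hypothesis $\la+\rho<r<\la+\rho+\del$ we have $\la+\rho+\del-r>0$, hence $K>0$, so the inequality $\bigl(\tfrac{c}{r}+L\bigr)K\le\tfrac{c}{r}$ holds exactly for $0<L\le L_0$ with $L_0=\tfrac{c}{r}\bigl(\tfrac1K-1\bigr)$. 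Thus everything comes down to checking $L_0>0$, i.e.\ $K<1$.

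To prove $K<1$ I would invoke the weighted arithmetic--geometric mean inequality: the exponents $p:=\tfrac{B_3-1}{B_3-B_4}$ and $q:=\tfrac{1-B_4}{B_3-B_4}$ are positive (since $B_4<0<1<B_3$) and satisfy $p+q=1$, so
\[
\left(-\,\dfrac{B_4}{B_3-1}\right)^{p}\left(\dfrac{B_3}{1-B_4}\right)^{q}\le p\cdot\dfrac{-B_4}{B_3-1}+q\cdot\dfrac{B_3}{1-B_4}=\dfrac{-B_4+B_3}{B_3-B_4}=1.
\]
Since also $\la+\rho+\del-r<\del$ (again because $r>\la+\rho$), this gives $K\le\tfrac{\la+\rho+\del-r}{\del}<1$, hence $L_0>0$. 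Putting the pieces together: for $0<L\le L_0$, \eqref{eq:pi_decr} holds and $\pv$ decreases on $]-L,0[$; for $L>L_0$ it fails, so $\pv_w(0-)>0>\pv_w(-L+)$ and $\pv$ first decreases then increases. I do not expect a genuine obstacle; the only delicate points are the sign bookkeeping that makes $p+q=1$ so that the weighted AM--GM applies cleanly, and using the strictness of $\pv_w(-L+)<0$ and $\pv_w(0-)>0$ to guarantee that in the case $L>L_0$ the turning point is genuinely interior to $]-L,0[$.
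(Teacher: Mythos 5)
Your proof is correct, and its skeleton --- reduce the corollary, via the strict convexity of $\pv$ and the endpoint-derivative criteria of Proposition~\ref{prop:pi_cx}, to showing that the set of $L$ satisfying \eqref{eq:pi_decr} is an interval $]0,L_0]$ with $L_0>0$, which in turn amounts to showing the constant $K<1$ --- is the same reduction the paper uses (and you spell out the structural half more carefully than the paper does, in particular the strictness needed to make both monotone pieces nonempty when $L>L_0$). Where you genuinely diverge is in the key technical step. The paper proves $f(r)<1$ dynamically: it checks $f(\la+\rho)=1$ and then shows $f'(r)<0$ on $]\la+\rho,\la+\rho+\del[$ by a logarithmic differentiation of $f$, which requires computing $\partial B_3/\partial r$, $\partial B_4/\partial r$ and reducing to the inequality $-B_4(1-B_4)/\bigl(B_3(B_3-1)\bigr)<1$. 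You instead get $f(r)\le1$ in one line from the weighted AM--GM inequality with weights $p=\tfrac{B_3-1}{B_3-B_4}$, $q=\tfrac{1-B_4}{B_3-B_4}$ summing to $1$, and then obtain strict $K<1$ from the separate strict factor $\tfrac{\la+\rho+\del-r}{\del}<1$; this is more elementary and does not even use $r>\la+\rho$ for the AM--GM part. (The two arguments are secretly linked: equality in your AM--GM and the sign of the paper's $f'$ both come down to whether $B_3+B_4>1$, i.e.\ $r>\la+\rho$.) The paper's route buys a little extra information --- monotonicity of the threshold in $r$ --- that the corollary does not need; yours is shorter and cleaner for the stated claim. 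One cosmetic point: your AM--GM weight $q$ collides with the paper's $q=B_1/(B_1-1)$ from \eqref{eq:q}, so rename it if this is to be inserted into the text.
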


\begin{proof}
To prove this corollary, it is enough to show that $f = f(r)$ decreases with respect to $r$ when $\la + \rho < r < \la + \rho + \del$, in which $f$ is defined by
\[
f(r) = \left( - \, \dfrac{B_4}{B_3 - 1} \right)^{\frac{B_3 - 1}{B_3 - B_4}} \left( \dfrac{B_3}{1 - B_4} \right)^{\frac{1 - B_4}{B_3 - B_4}},
\]
because $f(\la + \rho) = 1$.  To that end, note that
\[
\dfrac{\partial B_3}{\partial r} = \dfrac{B_3}{\sqrt{(r - \la - \rho + \del)^2 + 4 \del (\la + \rho)}} \, ,
\]
\[
\dfrac{\partial B_4}{\partial r} = - \, \dfrac{B_4}{\sqrt{(r - \la - \rho + \del)^2 + 4 \del (\la + \rho)}} \, ,
\]
and
\[
\dfrac{\partial }{\partial r} \, \dfrac{B_3 - 1}{B_3 - B_4} = \dfrac{B_3 + B_4 - 2 B_3 B_4}{(B_3 - B_4)^2 \sqrt{(r - \la - \rho + \del)^2 + 4 \del (\la + \rho)}} = - \, \dfrac{\partial }{\partial r} \, \dfrac{1 - B_4}{B_3 - B_4} \, .
\]
Differentiate $\ln f(r)$ with respect to $r$ to obtain
\begin{align*}
f'(r) &\propto \dfrac{B_3 + B_4 - 2 B_3 B_4}{B_3 - B_4} \, \ln \left( - \, \dfrac{B_4}{B_3 - 1} \right) + (B_3 - 1) \left( - 1 - \dfrac{B_3}{B_3 - 1} \right) \\
&\quad - \dfrac{B_3 + B_4 - 2 B_3 B_4}{B_3 - B_4} \, \ln \left( \dfrac{B_3}{1 - B_4} \right) + (1 - B_4) \left( 1 - \dfrac{B_4}{1 - B_4} \right) \\
&\propto \ln \left( - \, \dfrac{B_4(1 - B_4)}{B_3(B_3 - 1)} \right) - 2 \, \dfrac{(B_3 - B_4)(r - \la - \rho)}{r + \la + \rho + \del}.
\end{align*}
Because $r > \la + \rho$, $f'(r) < 0$ if
\[
- \, \dfrac{B_4(1 - B_4)}{B_3(B_3 - 1)}  < 1,
\]
which is straightforward to show when $r > \la + \rho$.  We have shown that $f$ decreases with respect to $r$, from which the statement of the corollary follows.
\end{proof}

Proposition \ref{prop:pi_cx} shows that if the rate of return on the riskless asset is low enough, that is, if $r \le \la + \rho$, then the individual will borrow more money to invest in the risky asset as wealth gets closer to zero.  Moreover, Corollary \ref{cor:pi_mono} shows that if the rate is somewhat larger but not too large, that is, if $\la + \rho < r < \la + \rho + \del$, and if $L$ is large enough, that is, if $L > L_0$, then the individual will also borrow more money for investment purposes as wealth gets closer to zero.  In other words, because the borrowing rate is low (or moderate with $L$ far away), the individual will take on more debt as wealth approaches zero in order to get wealth above zero and, thereby, get wealth out of danger from the exponential excursion clock.

In the final proposition, we examine how $\psi$ and $\pv$ change as $L$ increases. Because $\pv$ is independent of $L$ on $]0, c/r[\, $, we focus on how $\pv$ changes with $L$ on $]-L, 0[\, $.  Also, because the proof of this proposition is parallel to the proofs in Section 3.3 in Bayraktar and Young \cite{8.}, in the interest of space, we omit it.

\begin{prop}\label{prop:L}
The optimal investment strategy $\pv$ increases with increasing $L$ on $]-L, 0[\, $, and as $L \to \infty$, $\pv(w) \to \infty$ linearly with respect to $L$, for all $w < 0$.  Moreover, 
\[
\lim_{L \to \infty} \psi(w) = \inf_\pi \E^w(\kap^\pi < \tau_d) = 0,
\]
for all $w \le c/r$.  \qed
\end{prop}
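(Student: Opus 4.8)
The statement packages three claims: \textbf{(a)} for fixed $w\in\,]-L,0[\,$ the optimal amount $\pv(w)$ is increasing in $L$; \textbf{(b)} for each fixed $w<0$, $\pv(w)\to\infty$ linearly in $L$; and \textbf{(c)} $\lim_{L\to\infty}\psi(w)=\inf_\pi\P^w(\kap^\pi<\tau_d)=0$ for every $w\le c/r$. I would prove them in that order, following the template of Section~3.3 of Bayraktar and Young~\cite{8.}. For \textbf{(a)} the plan is an ODE-comparison argument based on \eqref{eq:pi_ode}, written as $\pv_w=F(w,\pv)$ with
\[
F(w,p)=\dfrac{2}{\mu-r}\,(\la+\rho+\del-r)-\dfrac{2}{\sig^2}\,\dfrac{c-rw}{p}.
\]
Two facts carry the argument: $F$ does not depend on $L$ and is strictly increasing in $p$ on $\{w<c/r,\ p>0\}$, so two solutions of the ODE cannot cross (uniqueness of the backward initial-value problem); and the affine map $\bet(w):=\frac{2r}{\mu-r}\left(\frac{c}{r}-w\right)$ is a strict sub-solution, since \eqref{eq:del} gives the one-line identity $\bet_w-F(w,\bet)=-\,\frac{2(\la+\rho)}{\mu-r}<0$. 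Because \eqref{eq:pi_L} says the solution $\pv$ of the $L$-problem equals $\bet$ at $w=-L$ (and $\bet_w(-L)<F(-L,\bet(-L))=\pv_w(-L+)$), sub-solution comparison forces $\pv>\bet$ on all of $]-L,0[\,$. Hence, writing $\pi_i$ for $\pv$ with $L=L_i$ and taking $L_1<L_2$, we get $\pi_2(-L_1)>\bet(-L_1)=\pi_1(-L_1+)$, and non-crossing then yields $\pi_1(w)<\pi_2(w)$ for all $w\in\,]-L_1,0[\,$.

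For \textbf{(b)}, and also for the limit of $\psi$, I would first determine the behaviour of the free boundaries as $L\to\infty$. Setting $z=\yoL$ and rearranging $g(z)=0$ from \eqref{eq:g} gives $S(z)=\frac{c/r}{c/r+L}$, where $S(z):=\frac{B_3(1-B_4)}{B_3-B_4}\,z^{B_3-1}+\frac{B_4(B_3-1)}{B_3-B_4}\,z^{B_4-1}$; by \eqref{eq:gz}, $S$ is strictly increasing on $]0,1[$ with $S(0^+)=-\infty$ and $S(1)=1$, so $\yoL\to z^*$, the unique zero of $S$, characterized by $(z^*)^{B_3-B_4}=\frac{-B_4(B_3-1)}{B_3(1-B_4)}$. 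Feeding $\yoL\to z^*$ into \eqref{eq:yo_1} shows $\yo=\Theta(1/L)\to 0$; hence $\yL=\yo/\yoL\to 0$ and $\left(\frac{c}{r}+L\right)\yL$ tends to a finite positive limit. Moreover, for fixed $w<0$, \eqref{eq:yw} reads $S(y/\yL)=\frac{c/r-w}{c/r+L}\to 0$, so $y/\yL\to z^*$. Plugging these limits into the first line of \eqref{eq:pi_sol} shows $\pv(w)$ equals $\left(\frac{c}{r}+L\right)$ times a factor that converges to the positive constant $\frac{\mu-r}{\sig^2}\,\frac{(B_3-1)(1-B_4)}{B_3-B_4}\left[B_3(z^*)^{B_3-1}-B_4(z^*)^{B_4-1}\right]$, which is exactly the asserted linear blow-up.

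For the limit of $\psi$, I would substitute the same free-boundary limits into \eqref{eq:psi_sol}: when $0\le w\le c/r$, $\psi(w)=\frac{c\yo}{rq}\left(1-\frac{rw}{c}\right)^q\to 0$ because $\yo\to 0$; when $-L\le w<0$, using $y/\yL\to z^*$, the limit of $\left(\frac{c}{r}+L\right)\yL$, and the defining relation for $z^*$, the quantity subtracted from $\frac{\rho}{\la+\rho}$ in the first line of \eqref{eq:psi_sol} converges to $\frac{\rho}{\la+\rho}$ (the bracketed product collapses to $1$), so $\psi(w)\to 0$. It then remains to identify the middle quantity in \textbf{(c)}. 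For each $L$ and $\varepsilon>0$, pick an admissible $\pi$ with $\E^w\big({\bf 1}_{\{\kap^\pi<\tau_d\}}{\bf 1}_{\{Z^\pi_{\kap^\pi\wedge\tau_d}>-L\}}+\frac{\rho}{\la+\rho}{\bf 1}_{\{Z^\pi_{\kap^\pi\wedge\tau_d}\le-L\}}\big)\le\psi(w)+\varepsilon$; since both summands are non-negative, $\P^w(\kap^\pi<\tau_d,\,Z^\pi_{\kap^\pi\wedge\tau_d}>-L)\le\psi(w)+\varepsilon$ and $\P^w(Z^\pi_{\kap^\pi\wedge\tau_d}\le-L)\le\frac{\la+\rho}{\rho}(\psi(w)+\varepsilon)$, whence $\P^w(\kap^\pi<\tau_d)\le\frac{\la+2\rho}{\rho}(\psi(w)+\varepsilon)$. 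Taking the infimum over $\pi$, then $\varepsilon\downarrow 0$ and $L\to\infty$, and using $\psi(w)\to 0$ gives $\inf_\pi\P^w(\kap^\pi<\tau_d)=0$.

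I expect the substantive content to be the combination, in the middle step, of the free-boundary asymptotics with the algebraic cancellation showing that the negative-wealth piece of $\psi$ vanishes in the limit: this is not obvious a priori, because both terms in the first line of \eqref{eq:psi_sol} have nonzero limits individually, and it is precisely the identity $(z^*)^{B_3-B_4}=\frac{-B_4(B_3-1)}{B_3(1-B_4)}$ that makes their difference tend to zero. By contrast, \textbf{(a)} is routine once the sub-solution $\bet$ is spotted, and the reduction for the middle quantity in the last paragraph is elementary.
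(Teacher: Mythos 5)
Your argument is correct and follows essentially the route the paper intends: the authors omit this proof, deferring to the parallel arguments in Section 3.3 of Bayraktar and Young \cite{8.}, and your combination of the sub-solution comparison for \eqref{eq:pi_ode} (with the affine sub-solution $\bet(w)=\tfrac{2r}{\mu-r}(\tfrac{c}{r}-w)$ matching the boundary value \eqref{eq:pi_L}) together with the free-boundary asymptotics $\yoL\to z^*$, $\yo,\yL\to 0$, $\big(\tfrac{c}{r}+L\big)\yL\to\ell>0$ is exactly that template. All the computations check out, including the identity $-\tfrac{B_4(B_3-1)}{B_3-B_4}\big[(z^*)^{B_4-B_3}-1\big]=1$ that makes the negative-wealth branch of $\psi$ in \eqref{eq:psi_sol} collapse to $0$, and the elementary bound $\P^w(\kap^\pi<\tau_d)\le\tfrac{\la+2\rho}{\rho}\,(\psi(w)+\varepsilon)$ that identifies the middle quantity.
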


As $L$ goes to $\infty$, once wealth becomes negative, the investment strategy grows linearly with respect to $L$ and becomes infinitely large, which leverages the wealth back into positive territory with probability $1$. Thus, the minimum probability of lifetime exponential Parisian ruin goes to $0$ for all $w$, which is trivially convex and non-decreasing on all of $\, ]-\infty, c/r]$.

We end this section with a numerical example, and we use the following parameter values:
\begin{itemize}
\item{} $r = 0.04$, $\mu = 0.08$, and $\sig = 0.20$, which implies that $\del = 0.02$.
\item{} $c = 1$, which implies that the safe level equals $25$.
\item{} $L = 100$.
\item{} $\la = 0.01$, which gives an expected future lifetime of 100 years.
\end{itemize}

We acknowledge that the expected future lifetime is too long to be realistic, but we wish to demonstrate the results of Proposition \ref{prop:pi_cx} by varying $\rho$ so that $r < \la + \rho$.  In Figure \ref{fig:pi}, we plot the optimal investment strategy $\pv$ for various values of $\rho$.  We also plot the optimal investment strategy $\pi_L$ to minimize expected occupation time (recall that it corresponds to
$\lim_{\rho \downarrow 0} \pv$) and the optimal investment strategy $\pi_0$ to minimize the probability of lifetime ruin.  When $\rho = 0$ or $0.01$, inequality \eqref{eq:pi_decr} holds, which implies that $\pv$ decreases with respect to wealth on $]-L, 0 [\,$, which we observe in Figure \ref{fig:pi}.  When $\rho = 0.02$, inequality \eqref{eq:pi_decr} does not hold and we have $\la + \rho < r < \la + \rho + \del$; thus, Proposition \ref{prop:pi_cx} and Corollary \ref{cor:pi_mono} imply that $\pv$ first decreases then increases with respect to wealth on $]-L, 0[\,$, and we see that in Figure \ref{fig:pi}.  When $\rho \ge 0.03$, then $r \le \la + \rho$, which implies that $\pv$ increases with respect to wealth on $]-L, 0[$ which we observe in Figure \ref{fig:pi}.  Also, note that $\pv$ increases with increasing $\rho$, which we expect from Proposition \ref{prop:pi_rho}, and $\pv > \pi_0$, which we expect from Proposition \ref{prop:pi_pi0}.

\begin{figure}[!htbp]
\centering
\includegraphics[width=4.5in]{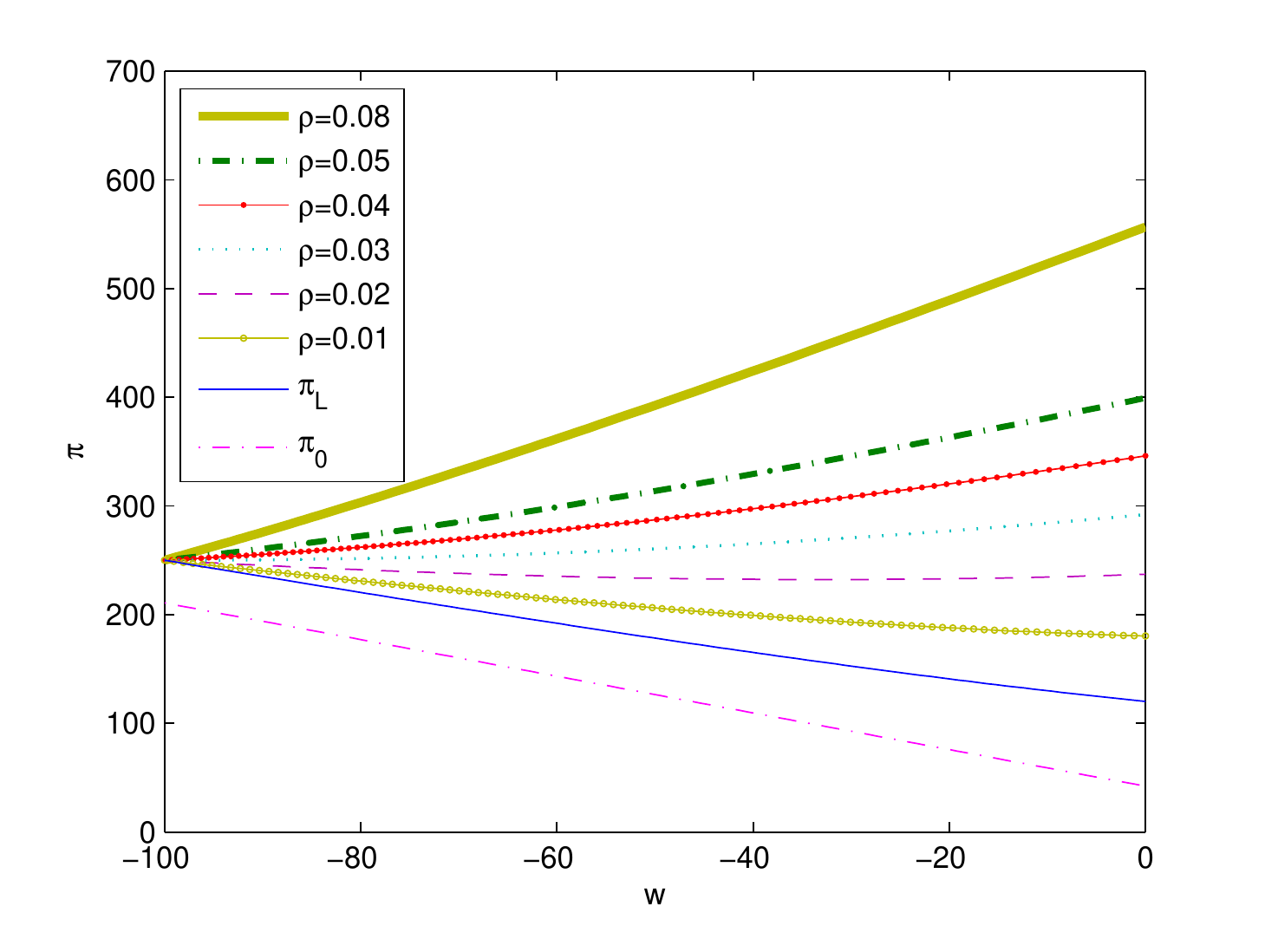}
\caption{The optimal investment strategy $\pv$ for various values of $\rho$, including $\pi_L$ and $\pi_0$.}
\label{fig:pi}
\end{figure}

\section{Restricting $\pi$ and Asymptotic Expansion in $\rho$}\label{sec:ext}

Proposition \ref{prop:pi_pi0} shows that $\pv$ equals the optimal investment strategy $\pi_0$ to minimize the probability of lifetime ruin when wealth is positive, and $\pv > \pi_0$ when wealth is negative.  Therefore, in Section \ref{sec:ext_1}, we limit the investment strategy so that the amount invested in the risky asset is no greater than when minimizing the probability of lifetime ruin, and we compute the minimum probability of lifetime exponential Parisian ruin under that restriction.

In Section 4.2, we provide an asymptotic expansion of the minimum probability of lifetime exponential Parisian ruin for small values of $\rho$, and we show that by following investment strategy corresponding to the first-order term, the value function is within $(\rho/\la)^2$ of $\psi$.

{\bf \subsection{Restricting $0 \le \pi \le \pi_0$}}\label{sec:ext_1}

Consider the probability of lifetime exponential Parisian ruin on $]-\infty, c/r[\,$, and define the corresponding value function $\psi_0$ by
\begin{equation}\label{eq:psi)}
\psi_0(w) = \inf_{\pi \in {\mathcal A}_0} \P^w \big( \kap^\pi< \tau_d \big),
\end{equation}
in which ${\mathcal A}_0$ is the set of admissible investment strategies as in Section 2, with the additional restriction that $0 \le \pi_t \le \pi_0(W_t)$ with probability 1 for all $t \ge 0$.  The optimal strategy $\pv$ is automatically positive when wealth lies below the safe level.  However, when we restrict admissible investment strategies by requiring $\pi_t \le \pi_0(W_t)$, we anticipate that $\psi_0$ will be concave on part of its domain, and we wish to avoid investment strategies that allow the individual to sell the risky asset short.  Thus, we add the restriction that $\pi_t \ge 0$ for all $t \ge 0$.

Via a verification theorem similar to Theorem \ref{thm:verif}, if we find a classical solution of the following BVP, then that solution equals $\psi_0$:
\begin{equation}\label{eq:psi0_BVP}
\begin{cases}
\la \Psi + \rho( \Psi - 1) {\bf 1}_{\{w < 0\}} = (rw - c) \Psi_w + \inf \limits_{0 \le \pi \le \pi_0(w)} \left[(\mu-r)\pi \Psi_w + \dfrac{1}{2} \, \sig^2 \pi^2 \Psi_{ww} \right] , \\
\lim \limits_{w \to - \infty} \Psi(w) = \dfrac{\rho}{\la + \rho} \, , \quad \Psi(c/r) = 0.
\end{cases}
\end{equation}
Under the ansatz that the optimal investment strategy thus restricted equals $\pi_0$, one can easily compute the solution of the BVP in \eqref{eq:psi0_BVP}.  In the following proposition, we present this solution and prove that it equals the value function $\psi_0$.

\begin{prop}\label{prop:psi0}
The minimum probability of lifetime exponential Parisian ruin $\psi_0$ under the restriction that $0 \le \pv \le \pi_0$ is given by
\begin{equation}\label{eq:psi0_sol}
\psi_0(w) =
\begin{cases}
 \dfrac{\rho}{\la + \rho} \left\{ 1 - \dfrac{q}{q + \alp} \left( 1 - \dfrac{rw}{c} \right)^{-\alp} \right\}, &\quad -\infty < w < 0, \vspace{1.5ex} \\
 \dfrac{\rho}{\la + \rho} \, \dfrac{\alp}{q + \alp}  \left( 1 - \dfrac{rw}{c} \right)^q , &\qquad 0 \le w \le \dfrac{c}{r} \, ,
\end{cases}
\end{equation}
in which $\alp$ equals
\begin{equation}\label{eq:alp}
\alp = \dfrac{q - 1}{2 \del} \left[ - (r - \la + \del) + \sqrt{(r - \la + \del)^2 + 4 \del (\la + \rho)} \, \right] > 0.
\end{equation}
The optimal investment strategy is given in feedback form via $\pi_0;$ specifically, when wealth equals $w < c/r$, the optimal amount to invest in the risky asset equals
\begin{equation}\label{eq:pi0}
\pi_0(w) =  \dfrac{\mu - r}{\sig^2} \, \dfrac{c/r - w}{q - 1} \, .
\end{equation}
\end{prop}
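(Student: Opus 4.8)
The plan is to exhibit the function $\Psi$ defined piecewise by \eqref{eq:psi0_sol} as a solution of the boundary-value problem \eqref{eq:psi0_BVP}, check that it has the regularity needed, and then invoke a verification theorem analogous to Theorem \ref{thm:verif} (adapted to the unbounded domain $]-\infty,c/r]$ by the usual localization) to conclude $\psi_0=\Psi$; along the way the minimizer in the constrained HJB equation will be identified as $\pi_0$. Thus the work splits into: (i) the two boundary conditions and the smooth pasting at $w=0$; (ii) solving the constrained HJB ODE on each of $]-\infty,0[$ and $]0,c/r[$; and (iii) pinning down the optimal feedback control.

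For (i): since $\alp>0$ we have $(1-rw/c)^{-\alp}\to 0$ as $w\to-\infty$, so $\lim_{w\to-\infty}\Psi(w)=\rho/(\la+\rho)$, and $(1-rw/c)^q=0$ at $w=c/r$ gives $\Psi(c/r)=0$. Evaluating both branches of \eqref{eq:psi0_sol} at $w=0$ yields the common value $\frac{\rho}{\la+\rho}\,\frac{\alp}{q+\alp}$, and a short computation shows $\Psi_w(0-)=\Psi_w(0+)=-\frac{\rho}{\la+\rho}\,\frac{\alp q r}{c(q+\alp)}$; hence $\Psi$ is continuous on $]-\infty,c/r]$, is $\C^1$ at $0$ and $\C^2$ off $0$, is bounded between $0$ and $\rho/(\la+\rho)$, and is non-increasing.

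For (ii)--(iii): on $]0,c/r[$, $\Psi$ is a positive multiple of $(1-rw/c)^q$, the same functional form as $\psi$ on $[0,c/r]$ in Theorem \ref{thm:psi}; a direct computation gives $-\frac{\mu-r}{\sig^2}\,\Psi_w/\Psi_{ww}=\frac{\mu-r}{\sig^2}\,\frac{c/r-w}{q-1}=\pi_0(w)$, which is positive and lies in $[0,\pi_0(w)]$, so the constrained infimum equals the unconstrained one, and the ODE collapses to the indicial relation $rq^2-(r+\la+\del)q+\la=0$ satisfied by $q$ in \eqref{eq:q}. On $]-\infty,0[$ one checks $\Psi_{ww}<0$, so $\Psi$ is \emph{concave} there; hence $\pi\mapsto(\mu-r)\pi\Psi_w+\tfrac12\sig^2\pi^2\Psi_{ww}$ is concave in $\pi$ and its infimum over $[0,\pi_0(w)]$ is attained at an endpoint. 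Substituting $\pi=\pi_0(w)$ makes the sum of the two $\pi$-terms a strictly negative multiple of $(1-rw/c)^{-\alp}$, hence smaller than the value $0$ obtained at $\pi=0$, so the infimum is at $\pi=\pi_0(w)$. Plugging the ansatz $\Psi=A-B(1-rw/c)^{-\alp}$ with $A=\rho/(\la+\rho)$ and $\pi=\pi_0(w)$ into the ODE, the constant terms cancel because $(\la+\rho)A-\rho=0$, and matching the coefficient of $(1-rw/c)^{-\alp}$ (using $rq^2=(r+\la+\del)q-\la$ to simplify) reduces to $\del\bigl(\tfrac{\alp}{q-1}\bigr)^2+(r-\la+\del)\tfrac{\alp}{q-1}-(\la+\rho)=0$, whose positive root is exactly \eqref{eq:alp}. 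This also verifies that the common feedback control on all of $]-\infty,c/r[$ is $\pi_0$.

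The step I expect to be the main obstacle is the sign bookkeeping on $]-\infty,0[$: because $\Psi$ is concave there, the interior first-order condition no longer locates a minimum, and one must instead argue that the constrained infimum sits at the \emph{right} endpoint $\pi_0(w)$ — equivalently, that $(\mu-r)\pi_0(w)\Psi_w+\tfrac12\sig^2\pi_0(w)^2\Psi_{ww}<0$. Once that is settled, the cancellation of the constant terms and the reduction to the quadratic for $\alp$ are routine, and the final appeal to the verification theorem is as in the proof of Theorem \ref{thm:verif}, with the stopping time $\tau_L$ replaced by a localization exploiting the boundedness of $\Psi$ and the condition $\lim_{w\to-\infty}\Psi(w)=\rho/(\la+\rho)$.
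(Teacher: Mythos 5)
Your proposal is correct and follows essentially the same route as the paper: verify that the piecewise expression solves the constrained HJB boundary-value problem \eqref{eq:psi0_BVP} with smooth pasting at $w=0$, identify the constrained minimizer as $\pi_0$ (trivially for $w\ge 0$; for $w<0$ by noting the objective is a downward-facing parabola in $\pi$ whose infimum over $[0,\pi_0(w)]$ sits at the right endpoint — the paper locates the apex at a negative $\pi$, you compare endpoint values, which amounts to the same thing), and then appeal to a verification theorem. The extra detail you supply (the quadratic for $\alp$ via the indicial relation for $q$, the matching of one-sided derivatives at $0$) is exactly what the paper dismisses as ``straightforward,'' and your computations check out.
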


\begin{proof}
It is straightforward to show that the expression in \eqref{eq:psi0_sol} solves the BVP in \eqref{eq:psi0_BVP} with $\pi = \pi_0$.  It remains to show that $\pi_0$ is the optimal investment strategy, that is,
\begin{equation}\label{eq:pi0_opt}
\hbox{arg min}_{0 \le \pi \le \pi_0(w)} \left[ (\mu-r)\pi \psi_0'(w) + \dfrac{1}{2} \, \sig^2 \pi^2 \psi_0''(w) \right] = \pi_0(w).
\end{equation}
$\pi_0$ is the unrestricted minimizer of the left-hand side of equation \eqref{eq:pi0_opt} when $0 \le w < c/r$.  Next, consider the expression in square brackets in \eqref{eq:pi0_opt} for $w < 0$:
\[
(\mu-r)\pi \psi_0'(w) + \dfrac{1}{2} \, \sig^2 \pi^2 \psi_0''(w) \propto -(\mu - r) \pi \left( \dfrac{c}{r} - w \right) - \dfrac{1}{2} \, \sig^2 \pi^2 (\alp + 1).
\]
Define the parabola $f$ by
\[
f(\pi) = -(\mu - r) \pi \left( \dfrac{c}{r} - w \right) - \dfrac{1}{2} \, \sig^2 \pi^2 (\alp + 1).
\]
The graph of $f$ is a downward facing parabola with apex at a negative value of $\pi$.  Thus, the argmin of $f(\pi)$ for $\pi \in [0, \pi_0(w)]$ equals the right limit of the interval $\pi_0(w)$.
\end{proof}

{\bf \subsection{Asymptotic Expansion of $\psi$ for Small $\rho$}}\label{sec:ext_2}

In this section, we show that $\psi(w) = \rho m(w) + O(\rho^2)$ uniformly with respect to $w$ on $[-L, c/r]$, in which $m$ is the value function associated with the problem of minimizing expected occupation time modulo the current value of the time of occupation; see Bayraktar and Young \cite{8.}.  To that end, begin by rewriting the boundary-value problem in \eqref{eq:psi_BVP} as follows:
\begin{equation}\label{eq:psi_BVP2}
\begin{cases}
{\rm{F}}(w, u, u_w, u_{ww}) = 0, \\
u(-L) = \dfrac{\rho}{\la + \rho} \, , \quad u(c/r) = 0,
\end{cases}
\end{equation}
in which the operator ${\rm{F}}$ is given by
\[
{\rm{F}}(w, u, u_w, u_{ww}) = \la u + \rho (u - 1) {\bf 1}_{\{ w < 0 \}} - \inf_{\pi} \left\{\big[ rw - c + (\mu-r)\pi \big] u_w + \frac{1}{2}\sig^2\pi^2 u_{ww} \right\}.
\]

We prove the following comparison lemma for sub- and super-solutions of \eqref{eq:psi_BVP2}.  Define the set of functions $\C$ by those that are in $\C^2\big(\; ]-L, c/r[ \, \big)$, except at $0$ where they are $\C^1$ and have left- and right-second derivatives.

\begin{lemma}\label{lem:comp}
Let $u, v \in \C$ be such that ${\rm{F}}(w, u, u_w, u_{ww}) < {\rm{F}}(w, v, v_w, v_{ww})$ for all $w \in \,]-L, c/r[\,$.\footnote{At $w = 0$, we mean that this inequality holds as we take both left- and right-limits; that is, both
\[
{\rm{F}}\big(0, u(0), u_w(0), u_{ww}(0-)\big) < {\rm{F}}\big(0, v(0), v_w(0), v_{ww}(0-)\big)
\]
and
\[
{\rm{F}}\big(0, u(0), u_w(0), u_{ww}(0+)\big) < {\rm{F}}\big(0, v(0), v_w(0), v_{ww}(0+)\big)
\]
hold.} If $u(-L) \le v(-L)$ and $u(c/r) \le v(c/r)$, then $u(w) < v(w)$ for all $w \in \,]-L, c/r[\,$.
\end{lemma}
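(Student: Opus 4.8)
The plan is to argue by contradiction using a maximum-principle argument adapted to the mild irregularity at $w=0$. Suppose, for contradiction, that the set $\{w \in \,]-L,c/r[\,: u(w) \ge v(w)\}$ is nonempty. Since $u - v$ is continuous on the compact interval $[-L, c/r]$ and is $\le 0$ at both endpoints, the function $u - v$ attains a nonnegative maximum at some interior point $w_0 \in \,]-L, c/r[\,$, and at that point $(u-v)(w_0) \ge 0$. First I would dispose of the smooth case: if $w_0 \ne 0$, then $u - v \in \C^2$ near $w_0$, so $u_w(w_0) = v_w(w_0)$ and $u_{ww}(w_0) \le v_{ww}(w_0)$. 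Writing $\pi^*$ for the minimizer of the bracketed expression in $\mathrm{F}(w_0, v, v_w, v_{ww})$ (which exists because $v$ is locally $\C^2$ there), and using that $\mathrm{F}$ takes an infimum over $\pi$, I get
\[
\mathrm{F}(w_0, u, u_w(w_0), u_{ww}(w_0)) \ge \la u(w_0) + \rho(u(w_0)-1)\mathbf{1}_{\{w_0<0\}} - \Big( [rw_0 - c + (\mu-r)\pi^*]u_w(w_0) + \tfrac12 \sig^2 (\pi^*)^2 u_{ww}(w_0)\Big).
\]
Since $u_w(w_0) = v_w(w_0)$, $u_{ww}(w_0) \le v_{ww}(w_0)$, and $u(w_0) \ge v(w_0)$, every term on the right dominates (or equals) the corresponding term in $\mathrm{F}(w_0, v, v_w(w_0), v_{ww}(w_0))$ — note the coefficient $\tfrac12\sig^2(\pi^*)^2 \ge 0$ multiplying $u_{ww} \le v_{ww}$ appears with a minus sign, hence contributes favorably — so $\mathrm{F}(w_0,u,u_w,u_{ww}) \ge \mathrm{F}(w_0,v,v_w,v_{ww})$, contradicting the strict inequality in the hypothesis.

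The main obstacle is the exceptional point $w_0 = 0$, where $u$ and $v$ are only $\C^1$ with one-sided second derivatives. If the maximum of $u-v$ occurs at $w_0 = 0$, I still have $u_w(0) = v_w(0)$ (interior maximum of a $\C^1$ function), but I must control the second-order behavior from one side. The key observation is that an interior maximum of the $\C^1$ function $u-v$ at $0$ forces $(u-v)$ to be nonincreasing just after $0$ and nondecreasing just before; combined with $(u-v)_w(0) = 0$, Taylor expansion on each side gives $(u-v)_{ww}(0-) \le 0$ \emph{or} $(u-v)_{ww}(0+) \le 0$ — in fact at least one of the one-sided second derivatives must be $\le 0$ (if both were strictly positive, $u-v$ would have a strict local minimum at $0$, not a maximum). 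Pick the side on which $u_{ww}(0\pm) \le v_{ww}(0\pm)$ holds, and run the identical first-order-condition argument as above with the corresponding one-sided second derivative; the hypothesis supplies the strict inequality $\mathrm{F}(0,u,u_w,u_{ww}(0\pm)) < \mathrm{F}(0,v,v_w,v_{ww}(0\pm))$ on exactly that side (as stipulated in the footnote), yielding the same contradiction.

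Therefore the assumed nonempty set cannot exist, so $u(w) < v(w)$ for all $w \in \,]-L, c/r[\,$; the strictness of the conclusion is inherited from the strictness of $\mathrm{F}(w,u,\cdot,\cdot) < \mathrm{F}(w,v,\cdot,\cdot)$, since equality $u(w_1) = v(w_1)$ at an interior point would make $w_1$ an interior maximizer and reproduce the contradiction. One routine check I would include is that the minimizer $\pi^*$ of the $\pi$-infimum in $\mathrm{F}$ is attained and finite at the relevant point: this holds wherever the local second derivative of $v$ is strictly positive (so the quadratic in $\pi$ opens upward), which is the case since the solutions of interest are strictly convex; in the degenerate case $v_{ww} = 0$ the infimum in $\pi$ is either $0$ (if $v_w = 0$) or $-\infty$, and the latter cannot occur for a function satisfying $\mathrm{F} = 0$, so this case does not arise for the comparison we need.
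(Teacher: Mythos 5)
Your proof is correct and follows essentially the same route as the paper's: derive a contradiction at an interior non-negative maximum $w_0$ of $u-v$ using $u_w(w_0)=v_w(w_0)$ and $u_{ww}(w_0)\le v_{ww}(w_0)$ (one-sided at $w_0=0$, exactly as the footnote anticipates), the only cosmetic difference being that you test $u$'s infimum with the minimizer $\pi^*$ of $v$'s infimum, whereas the paper substitutes the explicit value $-\delta u_w^2/u_{ww}$ of the infimum and treats the case $u_w(w_0)=u_{ww}(w_0)=0$ separately. One small remark: your dismissal of the degenerate case $v_{ww}(w_0)=0$ with $v_w(w_0)\neq 0$ appeals to $v$ solving ${\rm F}=0$, which the lemma does not assume; it is cleaner to observe that in this case ${\rm F}(w_0,v,v_w,v_{ww})=+\infty$, while $u_{ww}(w_0)\le v_{ww}(w_0)=0$ and $u_w(w_0)=v_w(w_0)\neq 0$ would force ${\rm F}(w_0,u,u_w,u_{ww})=+\infty$ as well, contradicting the hypothesis ${\rm F}(w_0,u,u_w,u_{ww})<{\rm F}(w_0,v,v_w,v_{ww})$, so the case is vacuous at the maximum point without any appeal to the application.
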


\begin{proof}
First, if the maximum of $u - v$ occurs on the boundary of $]-L, c/r[\,$, but not in the interior, then $u < v$ in the interior because $u \le v$ on the boundary by assumption.  Second, if $u - v$ attains a strictly negative maximum in the interior of $]-L, c/r[ \,$, then we also have $u < v$ in the interior.

Third, if $u - v$ attains a non-negative maximum at $w_0 \in \,]-L, c/r[\,$, then $u_w(w_0) = v_w(w_0)$ and $u_{ww}(w_0) \le v_{ww}(w_0)$.\footnote{If $w_0 = 0$, then we have both $u_{ww}(0-) \le v_{ww}(0-)$ and $u_{ww}(0+) \le v_{ww}(0+)$.}  Because ${\rm{F}}(w_0, u(w_0), u_w(w_0), u_{ww}(w_0)) < \infty$, either $u_{ww}(w_0) > 0$ or $u_{w}(w_0) = u_{ww}(w_0) = 0$.  In the former case, we have $v_{ww}(w_0) > 0$, and
\begin{align*}
0 &< {\rm{F}}(w_0, v(w_0), v_w(w_0), v_{ww}(w_0)) - {\rm{F}}(w_0, u(w_0), u_w(w_0), u_{ww}(w_0))\\
&= \left( \la v(w_0) + \rho (v(w_0) - 1) {\bf 1}_{\{ w_0 < 0 \}} \right) - \left( \la u(w_0) + \rho (u(w_0) - 1) {\bf 1}_{\{ w_0 < 0 \}} \right) + \del \, \frac{v^2_w(w_0)}{v_{ww}(w_0)} - \del \, \frac{u^2_w(w_0)}{u_{ww}(w_0)} \\
&=  \big(\la + \rho {\bf 1}_{\{ w_0 < 0 \}} \big)\big( v(w_0) - u(w_0) \big) + \del \, \frac{v^2_w(w_0)(u_{ww}(w_0)-v_{ww}(w_0))}{u_{ww}(w_0)v_{ww}(w_0)}\leq 0,
\end{align*}
a contradiction. In the latter case, we have $v_w(w_0) = 0$ and $v_{ww}(w_0) \ge 0$, from which it follows that
\begin{align*}
0 &< {\rm{F}}(w_0, v(w_0), v_w(w_0), v_{ww}(w_0)) - {\rm{F}}(w_0, u(w_0), u_w(w_0), u_{ww}(w_0))\\
&= \left( \la v(w_0) + \rho (v(w_0) - 1) {\bf 1}_{\{ w_0 < 0 \}} \right) - \left( \la u(w_0) + \rho (u(w_0) - 1) {\bf 1}_{\{ w_0 < 0 \}} \right) \\
&= \big(\la + \rho {\bf 1}_{\{ w_0 < 0 \}} \big)\big( v(w_0) - u(w_0) \big) \le 0,
\end{align*}
a contradiction.
\end{proof}

\begin{remark}
If we only want non-strict comparison, that is, $u\leq v$, then the sub-$($super-$)$solution property can be weakened to ${\rm F}(w, u, u_w, u_{ww}) \leq {\rm{F}}(w, v, v_w, v_{ww})$, with ${\rm{F}}(w, u, u_w, u_{ww})$ finite.   \qed
\end{remark}

Next, we turn to obtaining an asymptotic expression for $\psi$ for small values of $\rho$.  We focus on the case for which $w < 0$ because we have a semi-explicit expression for $\psi$ when $w \ge 0$, namely, $\psi(w) = \bet \left( 1 - rw/c \right)^q$ for some $\bet \in \;]0, 1[ \,$.  For $w < 0$, let
\begin{equation}\label{eq:expand}
\psio(w) + \rho \psil(w) + \rho^2 \psi^{(2)}(w) + \cdots
\end{equation}
be an asymptotic expansion of $\psi(w)$ as $\rho \to 0$; here, $\psio, \psil, \dots$ are independent of $\rho$.  By substituting \eqref{eq:expand} into \eqref{eq:psi_BVP} and collecting the terms of order $\rho^0$, we obtain an ODE for $\psio$.
\[
\la \psio = (rw - c) \psio_w - \del \, \frac{\Big(\psio_w \Big)^2}{\psio_{ww}} \, .
\]
Write the boundary condition at $w = -L$ in powers of $\rho$ as follows:
\begin{equation}\label{eq:bndry}
\psi(-L) = \dfrac{\rho}{\la + \rho} = \dfrac{\rho}{\la} \, \dfrac{1}{1 + \frac{\rho}{\la}} = \dfrac{\rho}{\la} - \left( \dfrac{\rho}{\la} \right)^2 + \left( \dfrac{\rho}{\la} \right)^3 - \cdots.
\end{equation}
Thus, to order $\rho^0$, the boundary condition for $\psio$ at $w = -L$ is $\psio(-L) = 0$.  This boundary condition and $\psi(c/r) = 0$ lead us to the $\rho^0$-order term of $\psio \equiv 0$.

By collecting the terms of order $\rho^1$, we obtain an ODE for $\psil$ for $w < 0$.
\begin{equation}\label{eq:psil_ODE}
\la \psil = 1 + (rw - c) \psil_w - \del \, \frac{\big(\psil_w \big)^2}{\psil_{ww}} \, ,
\end{equation}
with boundary conditions $\psil(-L) = 1/\la$ and $\psil(c/r) = 0$.  For $w \ge 0$, the ODE is as on $w < 0$ but without the $1$ on the right-hand side.  This BVP is {\it identical} to the one solved by $m$ in \eqref{eq:HJB_m}; thus, $\psil(w) = m(w)$.

In the next proposition, we show that $\rho \psil(w) = \rho m(w)$ approximates $\psi$ to order $\rho^2$ as $\rho \to 0$, uniformly for
$w \in \;]-L, c/r[\,$.

\begin{prop}\label{prop:psil}
\[
\psi(w) = \rho m(w) + O(\rho^2),
\]
as $\rho \to 0$ uniformly in $w \in \,]-L, c/r[\,$.
\end{prop}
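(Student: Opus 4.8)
The plan is to establish the uniform two-sided estimate
\[
|\psi(w) - \rho\, m(w)| < \frac{\rho^2}{\la^2} \qquad \text{for all } w \in \,]-L, c/r[ \text{ and all } \rho > 0,
\]
which yields the proposition immediately. I would obtain this by applying the comparison Lemma~\ref{lem:comp} twice, once against an explicit super-solution and once against an explicit sub-solution of the boundary-value problem \eqref{eq:psi_BVP2}, both built from $m$. Recall that $m$ is the classical solution of \eqref{eq:HJB_m}; in particular $m \in \C$, $0 \le m \le 1/\la$ on $[-L, c/r]$, and $m$ satisfies $\la m = {\bf 1}_{\{w < 0\}} + (rw - c) m_w - \del\, m_w^2 / m_{ww}$ on $\,]-L, c/r[\,$, with $m(-L) = 1/\la$ and $m(c/r) = 0$. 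Moreover $m_{ww} > 0$ wherever $m_w \ne 0$ (hence on all of $\,]-L, c/r[\,$, since $m$ is strictly decreasing there), because otherwise the infimum over $\pi$ in \eqref{eq:HJB_m} would be $-\infty$, contradicting that the left-hand side is finite.

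Fix $C = 1/\la^2$ and set $v^{\pm}(w) = \rho\, m(w) \pm C\rho^2$. Then $v^{\pm} \in \C$, with $v^{\pm}_w = \rho\, m_w$ and $v^{\pm}_{ww} = \rho\, m_{ww} > 0$, so ${\rm F}(w, v^{\pm}, v^{\pm}_w, v^{\pm}_{ww})$ is finite on $\,]-L, c/r[\,$. Substituting $v^{\pm}$ into ${\rm F}$ and invoking the ODE for $m$, all the terms linear in $\rho$ cancel, leaving
\[
{\rm F}\big(w, v^{+}, v^{+}_w, v^{+}_{ww}\big) = C\la\rho^2 + \rho^2 m(w)\,{\bf 1}_{\{w < 0\}} + C\rho^3\,{\bf 1}_{\{w < 0\}} \ge C\la\rho^2 > 0,
\]
\[
{\rm F}\big(w, v^{-}, v^{-}_w, v^{-}_{ww}\big) = -\,C\la\rho^2 + \rho^2 m(w)\,{\bf 1}_{\{w < 0\}} - C\rho^3\,{\bf 1}_{\{w < 0\}} < 0,
\]
where the last inequality uses $m(w) \le 1/\la = C\la$ (which kills the $\rho^2$ term for $w < 0$ and is vacuous for $w \ge 0$). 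Hence $v^{+}$ is a strict super-solution and $v^{-}$ a strict sub-solution of ${\rm F} = 0$, while $\psi$ satisfies ${\rm F}(w, \psi, \psi_w, \psi_{ww}) = 0$. At the endpoints, $v^{+}(c/r) = C\rho^2 > 0 = \psi(c/r)$ and $v^{-}(c/r) = -C\rho^2 < 0 = \psi(c/r)$; and at $w = -L$, the elementary expansion $\rho/(\la + \rho) = \rho/\la - \rho^2/\big(\la(\la + \rho)\big)$ from \eqref{eq:bndry}, together with $C = 1/\la^2 > 1/\big(\la(\la + \rho)\big)$, gives $v^{-}(-L) < \psi(-L) < v^{+}(-L)$.

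Applying Lemma~\ref{lem:comp} to the pair $(v^{-}, \psi)$ — permissible because ${\rm F}(w, v^{-}, v^{-}_w, v^{-}_{ww}) < 0 = {\rm F}(w, \psi, \psi_w, \psi_{ww})$ for all $w$ and $v^{-} \le \psi$ at both endpoints — yields $v^{-}(w) < \psi(w)$ on $\,]-L, c/r[\,$; applying it to the pair $(\psi, v^{+})$ yields $\psi(w) < v^{+}(w)$. Combining, $|\psi(w) - \rho m(w)| < C\rho^2 = \rho^2/\la^2$ uniformly in $w \in \,]-L, c/r[\,$, so $\psi(w) = \rho m(w) + O(\rho^2)$ as $\rho \to 0$, uniformly. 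I do not anticipate a genuine difficulty: this is a routine comparison sandwich, and the only points needing care are (i) the finiteness of ${\rm F}$ at $v^{\pm}$, which is exactly the property $m_{ww} > 0$ where $m_w \ne 0$ noted above; and (ii) the kink of $m$, hence of $v^{\pm}$, at $w = 0$, where the differential inequalities must be checked separately with the left- and right-second derivatives, as the footnotes to Lemma~\ref{lem:comp} require — but since $v^{\pm}_{ww}(0\pm) = \rho\, m_{ww}(0\pm) > 0$, those one-sided verifications coincide with the generic ones, and the overall constant $(\rho/\la)^2$ is consistent with the estimate mentioned at the start of this section.
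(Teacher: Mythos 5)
Your proof is correct and follows essentially the same route as the paper: both apply the comparison Lemma to sub- and super-solutions built from $m$, using the ODE for $m$ to cancel the $O(\rho)$ terms. The only difference is cosmetic --- you sandwich $\psi$ symmetrically between $\rho m \pm \rho^2/\la^2$, while the paper uses $\rho m$ itself as the upper barrier (which forces it to invoke the non-strict version of the comparison result for $w \ge 0$, where ${\rm F}(w,\rho m,\rho m_w,\rho m_{ww}) = 0$) and $\rho m - (\rho/\la)^2$ as the lower one; your choice keeps both inequalities strict at the cost of a marginally larger constant.
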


\begin{proof}
First, because $m$ is convex, we compute
\begin{align}\label{eq:F_psil}
{\rm{F}}\big(w,  \rho m,  \rho m_w, \rho m_{ww} \big) &=  \la \rho m + \rho (\rho m - 1) {\bf 1}_{\{ w < 0 \}} \notag \\
& \quad - \inf_{\pi} \left\{\big(rw - c + (\mu-r) \pi \big) \rho m_w + \frac{1}{2}\sig^2\pi^2 \rho m_{ww} \right\} \notag \\
&= \rho \left\{ \la m + (\rho m - 1) {\bf 1}_{\{ w < 0 \}}  - (rw - c)m_w + \del \, \frac{\big(m_w \big)^2}{m_{ww}} \right\} \notag \\
&= \rho^2 m(w) {\bf 1}_{\{ w < 0 \}},
\end{align}
which, because $m \in [0, 1/\la]$, we deduce that
\begin{equation}\label{eq:F_psil_bnds}
0 \le {\rm{F}} \big(w,  \rho m,  \rho m_w, \rho m_{ww} \big) \le \dfrac{\rho^2}{\la} \, ,
\end{equation}
strictly for $-L < w < c/r$.  Also,
\[
\psi(-L) = \dfrac{\rho}{\la + \rho} < \dfrac{\rho}{\la} = \rho m(-L),
\]
and
\[
\psi(c/r) = 0 = \rho m(c/r).
\]
Thus, because ${\rm{F}}(w, \psi, \psi_w, \psi_{ww}) = 0$, Lemma \ref{lem:comp} implies that $\psi(w) < \rho m(w)$ for all $-L \le w < c/r$.

Via a calculation similar to the one in \eqref{eq:F_psil}, one can show that
\[
{\rm{F}}\big(w,  \rho m - (\rho/\la)^2,  \rho m_w, \rho m_{ww} \big) =
\begin{cases}
- \, \dfrac{\rho^2}{\la}, &\quad w \ge 0,\vspace{1ex} \\
- \rho^2 \left( \dfrac{1}{\la} - m(w) \right) - \dfrac{\rho^3}{\la^2}, &\quad w < 0,
\end{cases}
\]
which is negative because $m(w)$ decreases from $1/\la$ to $0$ as $w$ increases from $-L$ to $c/r$.  Also,
\begin{equation}\label{eq:4_13}
\rho m(-L) - \left( \dfrac{\rho}{\la} \right)^2 = \dfrac{\rho}{\la} -  \left( \dfrac{\rho}{\la} \right)^2 <  \dfrac{\rho}{\la + \rho} = \psi(-L),
\end{equation}
and
\begin{equation}\label{eq:4_14}
\rho m(c/r) - \left( \dfrac{\rho}{\la} \right)^2 = - \left( \dfrac{\rho}{\la} \right)^2 < 0 = \psi(c/r).
\end{equation}
Thus, because ${\rm F}(w, \psi, \psi_w, \psi_{ww}) = 0$, Lemma \ref{lem:comp} implies that $\rho m(w) - (\rho/\la)^2 < \psi(w)$ for all \break $-L \le w \le c/r$.

We have shown that
\[
\rho m(w) - \left( \dfrac{\rho}{\la} \right)^2 < \psi(w) < \rho m(w),
\]
for all $-L \le w < c/r$, which proves the proposition.
\end{proof}

Finally, if the individual follows the investment strategy defined by $ \rho m$, which is given by $\lim_{\rho \downarrow 0} \pv$, then the resulting value function is within order $O(\rho^2)$ of the minimum probability of lifetime exponential Parisian ruin $\psi$.

\begin{cor}\label{cor:piL}
The investment strategy given in feedback form via
\[
\pi_L(w) = - \dfrac{\mu-r}{\sigma^2} \, \dfrac{  m_w(w)}{  m_{ww}(w)} = \lim_{\rho \downarrow 0} \pv(w)
\]
is optimal to the order of $O(\rho^2)$ as $\rho \to 0$.
\end{cor}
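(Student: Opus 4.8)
The plan is to estimate directly the value produced by actually running the feedback strategy $\pi_L$ and to show it lies within $O(\rho^2)$ of $\psi$. Write $\psi^{\pi_L}$ for the expression in \eqref{eq:psi} with the infimum deleted and $\pi$ replaced by the feedback control $\pi_t = \pi_L(W_t)$. This strategy is admissible: $\pi_L$ is bounded on $[-L, c/r]$ (hence square-integrable), and $\pi_L(c/r) = - \frac{\mu - r}{\sig^2}\, m_w(c/r)/m_{ww}(c/r) = \lim_{\rho \downarrow 0} \pv(c/r) = 0$ by the second line of \eqref{eq:pi_sol}, so that wealth reaching $c/r$ stays there. Consequently $\psi^{\pi_L}(w) \ge \psi(w)$ for every $w$, and Proposition \ref{prop:psil} supplies the lower bound $\psi^{\pi_L}(w) > \rho m(w) - (\rho/\la)^2$ on $[-L, c/r)$. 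It then remains only to prove the matching upper bound $\psi^{\pi_L}(w) \le \rho m(w)$.

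For the upper bound, the idea is that $\rho m$ is a supersolution of the linear problem governing $\psi^{\pi_L}$. Let ${\rm F}^{\pi_L}$ denote the operator ${\rm F}$ of \eqref{eq:psi_BVP2} with the infimum over $\pi$ omitted and $\pi$ set equal to $\pi_L(w)$. Since $\pi_L(w)$ is, by construction, the pointwise minimizer of $\pi \mapsto (\mu - r)\pi\, m_w(w) + \frac{1}{2}\sig^2 \pi^2 m_{ww}(w)$ and $m$ solves the BVP \eqref{eq:HJB_m}, the computation behind \eqref{eq:F_psil}--\eqref{eq:F_psil_bnds} carries over and gives
\begin{align*}
{\rm F}^{\pi_L}\big(w, \rho m, \rho m_w, \rho m_{ww}\big) &= \la \rho m + \rho(\rho m - 1){\bf 1}_{\{w < 0\}} - \Big[\big(rw - c + (\mu - r)\pi_L\big)\rho m_w + \frac{1}{2}\sig^2 \pi_L^2 \rho m_{ww}\Big] \\
&= \rho^2 m(w)\,{\bf 1}_{\{w < 0\}} \;\in\; \big[\,0,\, \rho^2/\la\,\big],
\end{align*}
since $m \in [0, 1/\la]$. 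I would then mimic the It\^o step of the proof of Theorem \ref{thm:verif}: apply the generalized It\^o formula for convex functions to $\rho m\big(W^{\pi_L}_{t \wedge \tau \wedge \tau_n}\big)$, with $\tau = \tau_d \wedge \kap \wedge \tau_{c/r} \wedge \tau_L$, $\tau_n$ the corresponding localizing sequence, and $\rho m$ extended to the killing states $\De$ and $\K$ by $0$ and $1$; taking expectations --- the Brownian integral vanishes by the choice of $\tau_n$ and the two jump integrals pass to their compensators --- the drift terms recombine into $- \E^w\!\left[\int_0^{t \wedge \tau \wedge \tau_n} {\rm F}^{\pi_L}(W^{\pi_L}_s, \rho m, \rho m_w, \rho m_{ww})\, \dd s\right] \le 0$, so that $\E^w[\rho m(W^{\pi_L}_{t \wedge \tau \wedge \tau_n})] \le \rho m(w)$. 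Letting $t, n \to \infty$ by dominated convergence (valid since $\rho m$ is bounded) gives $\E^w[\rho m(W^{\pi_L}_\tau)] \le \rho m(w)$, while the same identification as in \eqref{eq:m} shows $\rho m(W^{\pi_L}_\tau) = {\bf 1}_{\{\kap < \tau_d\}}{\bf 1}_{\{Z^{\pi_L}_{\kap \wedge \tau_d} > -L\}} + \frac{\rho}{\la}\,{\bf 1}_{\{Z^{\pi_L}_{\kap \wedge \tau_d} \le -L\}}$, which dominates the payoff in \eqref{eq:psi} because $\rho/\la > \rho/(\la + \rho)$ and the two agree off $\{Z^{\pi_L}_{\kap \wedge \tau_d} \le -L\}$; hence $\psi^{\pi_L}(w) \le \E^w[\rho m(W^{\pi_L}_\tau)] \le \rho m(w)$.

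Combining the two bounds yields $\rho m(w) - (\rho/\la)^2 < \psi^{\pi_L}(w) \le \rho m(w)$ on $[-L, c/r)$, so $\psi^{\pi_L}(w) = \rho m(w) + O(\rho^2)$, and since $\psi(w) = \rho m(w) + O(\rho^2)$ by Proposition \ref{prop:psil}, we conclude $\psi^{\pi_L}(w) - \psi(w) = O(\rho^2)$ uniformly in $w$, which is precisely the assertion of the corollary. An alternative to the direct It\^o estimate is to identify $\psi^{\pi_L}$ with the classical solution of the linear BVP ${\rm F}^{\pi_L}(w, u, u_w, u_{ww}) = 0$, $u(-L) = \rho/(\la + \rho)$, $u(c/r) = 0$ --- via the verification reasoning of Theorem \ref{thm:verif} applied to a fixed control, so that \eqref{eq:3.9} holds with equality --- and then to apply the weak form of Lemma \ref{lem:comp} in the remark following it (whose maximum-principle argument goes through, and simplifies, for the linear operator ${\rm F}^{\pi_L}$) with $u = \psi^{\pi_L}$, $v = \rho m$, using $\psi^{\pi_L}(-L) = \rho/(\la + \rho) \le \rho/\la = \rho m(-L)$ and $\psi^{\pi_L}(c/r) = 0 = \rho m(c/r)$. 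I expect the main obstacle to be the bookkeeping in the It\^o estimate --- confirming the admissibility of $\pi_L$, the vanishing in expectation of the local-martingale and jump-compensator terms, the terminal-state extension of $\rho m$, and the degenerate behavior of the coefficients near $w = c/r$ --- but none of this is deep, since every piece reproduces an argument already carried out in the proofs of Theorem \ref{thm:verif} and Proposition \ref{prop:psil}.
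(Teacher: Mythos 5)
Your argument is correct, and it reaches the paper's conclusion by a route that differs in two worthwhile respects. The paper first \emph{defines} $\psi_L$ as the classical solution of the linear BVP ${\rm G} = 0$ (outsourcing existence and uniqueness to Bayraktar and Zhang \cite{4.} and a verification step), and then sandwiches $\psi_L$ between $\rho m - (\rho/\la)^2$ and $\rho m$ by applying the comparison lemma, Lemma \ref{lem:comp}, \emph{twice} --- once with $\rho m$ as a supersolution and once with $\rho m - (\rho/\la)^2$ as a subsolution --- before combining with Proposition \ref{prop:psil}. You instead work directly with the value $\psi^{\pi_L}$ of running the feedback control, so the lower bound comes for free from the trivial inequality $\psi^{\pi_L} \ge \psi > \rho m - (\rho/\la)^2$ (admissibility of $\pi_L$ being the only thing to check), which eliminates one of the two comparison arguments entirely; and your upper bound replaces the PDE comparison by a direct It\^o/optional-sampling estimate showing that $\rho m\big(W^{\pi_L}_{t\wedge\tau}\big)$ is a supermartingale, using the same key identity ${\rm F}^{\pi_L}(w, \rho m, \rho m_w, \rho m_{ww}) = \rho^2 m(w) {\bf 1}_{\{w<0\}} \ge 0$ that underlies the paper's computation of ${\rm G}(w, \rho m, \rho m_w, \rho m_{ww})$. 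Your treatment of the boundary mismatch is also clean: you correctly note that $\rho m(W^{\pi_L}_\tau)$ dominates the payoff in \eqref{eq:psi} because $\rho/\la > \rho/(\la+\rho)$, so no separate adjustment is needed for the upper bound. What the paper's route buys is that all stochastic-analysis bookkeeping is confined to one cited verification theorem, with the estimates then being purely ODE comparisons; what your route buys is independence from the existence/uniqueness theory for the linear BVP, at the cost of repeating the It\^o argument of Theorem \ref{thm:verif} for the fixed control (including the extension of $\rho m$ to the killing states $\De$ and $\K$ and the well-posedness of the wealth SDE under $\pi_L$, both of which you rightly flag as routine). Your closing remark that one could instead identify $\psi^{\pi_L}$ with the BVP solution and invoke the weak form of Lemma \ref{lem:comp} is, in fact, exactly the paper's proof.
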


\begin{proof}
Consider the semi-linear, second-order ODE
\begin{equation*}
{\rm{G}}(w, u, u_w, u_{ww}) = \la u + \rho(u - 1) {\bf 1}_{\{ w < 0 \}} - \big[(rw-c) + (\mu-r) \pi_L \big] u_w - \frac{1}{2} \sig^2 \pi_L^2 u_{ww} = 0,
\end{equation*}
with boundary conditions $u(-L) = \rho/(\la + \rho)$ and $u(c/r) = 0$.  As in Bayraktar and Zhang \cite{4.}, one can show that there exists a unique solution $\psi_L$ of this BVP, and by a verification theorem similar to Theorem \ref{thm:verif}, one can show that $\psi_L$ equals the (L-approximate) probability of lifetime exponential Parisian ruin associated with the investment strategy given by $\pi_L$.  Similarly, one can prove a verification lemma for ${\rm{G}}$
on $]-L, c/r[\,$.

One can show that
\[
{\rm{G}}\big(w,  \rho m,  \rho m_w, \rho m_{ww} \big) = \rho^2 m(w) {\bf 1}_{\{ w < 0 \}} > 0,
\]
for $-L < w < c/r$. Also,
\[
\psi_L(-L) = \dfrac{\rho}{\la + \rho} < \dfrac{\rho}{\la} = \rho m(-L),
\]
and
\[
\psi_L(c/r) = 0 = \rho m(c/r).
\]
Thus, because ${\rm{G}}(w, \psi_L, (\psi_L)_w, (\psi_L)_{ww}) = 0$, we deduce that $\psi_L(w) < \rho m(w)$ for all \\
 $-L \le w < c/r$.  Similarly,
\[
{\rm{G}}\big(w,  \rho m - (\rho/\la)^2,  \rho m_w, \rho m_{ww} \big) < 0,
\]
for $-L < w < c/r$.  Also, by \eqref{eq:4_13} and \eqref{eq:4_14},
\[
\rho m(-L) - \left( \dfrac{\rho}{\la} \right)^2 = \dfrac{\rho}{\la} -  \left( \dfrac{\rho}{\la} \right)^2 <  \dfrac{\rho}{\la + \rho} = \psi_L(-L),
\]
and
\[
\rho m(c/r) - \left( \dfrac{\rho}{\la} \right)^2 = - \left( \dfrac{\rho}{\la} \right)^2 < 0 = \psi_L(c/r).
\]
Thus, because ${\rm{G}}(w, \psi_L, (\psi_L)_w, (\psi_L)_{ww}) = 0$, we deduce that $\rho m(w) - (\rho/\la)^2 < \psi_L(w)$ for all $-L \le w \le c/r$.

We have shown that
\[
\rho m(w) - \left( \dfrac{\rho}{\la} \right)^2 < \psi_L(w) < \rho m(w),
\]
for all $-L \le w < c/r$.  Recall from the proof of Proposition \ref{prop:psil} that
\[
\rho m(w) - \left( \dfrac{\rho}{\la} \right)^2 < \psi(w) < \rho m(w),
\]
for all $-L \le w < c/r$.  By combining the above two sets of inequalities, we obtain
\[
\psi(w) - \left( \dfrac{\rho}{\la} \right)^2 < \psi_L(w) < \psi(w) + \left( \dfrac{\rho}{\la} \right)^2,
\]
for all $-L \le w \le c/r$, which proves the corollary.
\end{proof}

Proposition \ref{prop:psil} and Corollary \ref{cor:piL} help us to understand the observations preceding Proposition \ref{prop:pi_rho}, namely, that $\lim_{\rho \downarrow 0} \psi \equiv 0$ but $\lim_{\rho \downarrow 0} \pv = \pi_L$.  Furthermore, from Section 6.2 in Landriault et al.\ \cite{13.}, we know that the probability of exponential Parisian ruin equals
\[
\P(\kap^\pi < \infty) = 1 - \E \Big( e^{-\rho \int_0^\infty {\bf 1}_{\{ W^\pi_t < 0\}} \dd t} \Big);
\]
thus, as $\rho \downarrow 0$, we see that this expression goes to $0$ regardless of the time of death of the individual, which also implies that the probability of {\it lifetime} exponential Parisian ruin goes to $0$ as $\rho \downarrow 0$.

\section{Conclusions}\label{sec:5}

In this paper, we (approximately) minimized the probability of lifetime exponential Parisian ruin, that is, the probability that wealth stays below zero in excess of an exponentially distributed clock and before the individual dies.  Most of the related research we found  calculates the probability of Parisian ruin; our work is the first to {\it control} the probability of Parisian ruin.

We proved some interesting properties of the optimal investment strategy $\pv$.  For example, we proved that as the rate of the exponential clock controlling Parisian ruin $\rho$ increases, the optimal investment strategy remains the same for positive wealth and strictly increases for negative wealth.  The independence of the optimal investment strategy on $\rho$ when wealth is positive shows a myopia that we commonly see in such goal-seeking problems.  We also analyzed how $\pv$ changes with $L$, and $\pv$'s behavior depends on how $r$ is related to the other parameters.

We also obtained an asymptotic expansion of the minimum probability of lifetime exponential Parisian ruin for small values of the hazard rate of the excursion clock and found a close connection between our value function $\psi$ and the minimum expected occupation time $m$ from Bayraktar and Young \cite{8.}.  Specifically, we proved that $\psi = \rho m + O(\rho^2)$ uniformly on $[-L, c/r]$.  We also proved that using the investment strategy corresponding to $m$'s problem yields a value function that is within order $O(\rho^2)$ of $\psi$.

In future work, we will add life insurance and life annuity products to our model and will analyze the effects of Parisian ruin on the investment, life insurance, and annuity purchasing strategies, as compared with the corresponding strategies when minimizing the probability of ordinary ruin.  Another interesting application of exponential Parisian ruin is to consider stochastic control problems with non-life insurance models, especially from the standpoint of an insurance company purchasing reinsurance; see, for example, our working paper \cite{19.}.

\begin{acknowledgements}
X. Liang thanks the National Natural Science Foundation of China (11701139, 11571189) and the Natural Science Foundation
of Hebei Province (A2018202057) for financial support of her research. V.R. Young thanks the Cecil J. and Ethel M. Nesbitt Professorship for financial support of her research.
\end{acknowledgements}



\begin{thebibliography}{}

\bibitem{1.} Milevsky, M. A. and Robinson, C.: Self-annuitization and ruin in retirement, with discussion. { N. Am. Actuar. J.} 4(4): 112-129 (2000)

\bibitem{2.}  Young, V. R.: Optimal investment strategy to minimize the probability of lifetime ruin. { N. Am. Actuar. J.} 8(4): 105-126 (2004)

\bibitem{3.}  Bayraktar, E. and Young, V. R.: Minimizing the probability of lifetime ruin under borrowing constraints. { Insur. Math. Econ.} 41(1): 196-221 (2007)

\bibitem{4.}  Bayraktar, E. and Zhang, Y.: Minimizing the probability of lifetime ruin under ambiguity aversion. {SIAM J. Control Optim.} 53(1): 58-90 (2015)

\bibitem{5.} Bayraktar, E., Hu, X. and Young, V. R.: Minimizing the probability of lifetime ruin under stochastic volatility. {Insur. Math. Econ.} 49 (2): 194-206 (2011)

\bibitem{6.}  Bayraktar, E. and Zhang, Y.: Stochastic Perron's Method for the probability of lifetime ruin problem under transaction costs. {SIAM J. Control Optim.} 53(1), 91-113 (2015)

\bibitem{7.} Liang, X. and Young, V. R.: Minimizing the probability of ruin: two riskless assets with transaction costs and proportional reinsurance. {Stat. Probab. Lett.} 140: 167-175  (2018)

\bibitem{8.}  Bayraktar, E. and Young, V. R.: Optimal investment strategy to minimize occupation time. {Ann. Oper. Res.} 176(1): 389-408 (2010)

\bibitem{9.} Chesney, M., Jeanblanc-Picqu\'e, M. and Yor, M.: Brownian excursions and Parisian barrier options. {Adv. in Appl. Probab.} 29(1): 165-184 (1997)

\bibitem{10.} Dassios, A. and Wu, S.: Parisian ruin with exponential claims.  Working paper, London School of Economics, (2008)

\bibitem{11.} Czarna, I. and Palmowski, Z.: Ruin probability with Parisian delay for a spectrally negative L\'evy risk process.  {J. of Appl. Probab.} 48(4): 984-1002 (2011)

\bibitem{12.} Loeffen, R., Czarna, I. and Palmowski, Z.: Parisian ruin probability of spectrally negative L\'evy processes. {Bernoulli}, 19(2): 599-609  (2013)

\bibitem{13.} Landriault, D., Renaud, J.-F. and Zhou, X.: Occupation times of spectrally negative L\'evy processes with applications. {Stoch. Proc. Appl.} 121: 2629-2641 (2011)

\bibitem{14.} Landriault, D., Renaud, J.-F. and Zhou, X.: An insurance risk model with Parisian implementation delays. {Methodol. Comput. Appl. Probab.} 16:583-607 (2014)

\bibitem{15.} Gu\'erin, H. and Renaud, J.-F.: On the distribution of cumulative Parisian ruin. {Insur. Math. Econ.} 73: 116-123 (2017)

\bibitem{16.} Moore, K. S. and Young, V. R.: Optimal and simple, nearly-optimal rules for minimizing the probability of financial ruin in retirement. {N. Am. Actuar. J.} 10(4): 145-161 (2006)

\bibitem{17.} Karatzas, I. and Shreve, S. E. {Brownian Motion and Stochastic Calculus}, second edition. Springer, New York (1991)

\bibitem{18.} Walter, W. {Differential and Integral Inequalities}. Springer-Verlag, New York (1970)

\bibitem{19.} Liang, X. and Young, V. R.: Minimizing the discounted probability of exponential Parisian ruin via reinsurance. Working paper, Department of Mathematics, University of Michigan (2019)

\end{thebibliography}
\end{document}